\newtheorem{theorem}{Theorem}
\newtheorem{proposition}[theorem]{Proposition}
\newtheorem{lemma}[theorem]{Lemma}
\newtheorem{corollary}[theorem]{Corollary}
\theoremstyle{definition}
\newtheorem{remark}[theorem]{Remark}
\newtheorem*{acknowledgements}{Acknowledgments}
\newcommand{\Q}{\mathbb{Q}}
\newcommand{\Z}{\mathbb{Z}}
\DeclareMathOperator{\Ker}{Ker}
\DeclareMathOperator{\Gal}{Gal}
\DeclareMathOperator{\GL}{GL}
\DeclareMathOperator{\id}{id}
\DeclareMathOperator{\coker}{coker}
\DeclareMathOperator{\im}{Im}
\def\diam#1{\langle#1\rangle}
\begin{document}

\title[The number of twists with large torsion of an ellitpic curve ]{The number of twists with large torsion of an ellitpic curve}
\author{Filip Najman}
\address{Department of Mathematics\\ University of Zagreb\\ Bijeni\v cka cesta 30\\ 10000 Zagreb\\ Croatia}
\email{fnajman@math.hr}
\thanks{The author was supported by the Ministry of Science, Education, and Sports, Republic of Croatia, grant 037-0372781-2821.}
\keywords{Elliptic curves, torsion subgroups, twists}
\subjclass[2010]{11G05}
\begin{abstract}
For an elliptic curve $E/\Q$, we determine the maximum number of twists $E^d/\Q$ it can have such that $E^d(\Q)_{tors}\supsetneq E(\Q)[2]$. We use these results to determine the number of distinct quadratic fields $K$ such that  $E(K)_{tors}\supsetneq E(\Q)_{tors}$. The answer depends on $E(\Q)_{tors}$ and we give the best possible bound for all the possible cases.
\end{abstract}
\maketitle

\section{Introduction}

Let $E/ K$ be an elliptic curve. It is well known, by the Mordell-Weil theorem, that $E(K)$ is a finitely generated abelian group and can hence be written as the direct product of its torsion subgroup and $r$ copies of $\Z$, where $r$ is the rank of $E/K$. We will be interested mostly in the case $K=\Q$, but we will write statements in greater generality whenever possible.

By Mazur's torsion theorem \cite{maz1}, $E(\Q)_{tors}$ is isomorphic to one of the following 15 groups:
\begin{equation}
\vcenter{\openup\jot\halign{$\hfil#\hfil$\cr
C_m, 1 \leq m\leq 12,\ m\neq 11,\cr
C_2  \oplus C_{2m}, \ 1 \leq m\leq 4,\cr}}
\label{rac}
\end{equation}
where $C_m$ is a cyclic group of order $m$. Denote by $E^d$, where $d\in K^*/(K^*)^2$, a quadratic twist of $E/K$. We will assume throughout the paper that $d$ is nonsquare, i.e. that $E^d$ is not isomorphic to $E$ over $K$.

It is natural to ask how the Mordell-Weil group $E(K)$ changes upon (quadratic) twisting. How the rank changes in families of quadratic twists is a subject that has been written extensively about (see \cite{gm,kmr,mr} and the references therein).

It is a trivial fact the $2$-torsion $E(K)[2]$ of $E(K)$ does not change upon quadratic twisting. We will say that a quadratic twist $E^d/K$ has \textit{large} torsion if $E^d(K)_{tors}\neq E(K)[2]$.

Another fact is that in a family of quadratic twists, $E^d(K)_{tors}$ will consist only of the even order torsion points for all but finitely many $d\in K^*/(K^*)^2$ (see \cite[Lemma 5.5]{mr}, \cite[Proposition 1]{gm}). This statement follows easily from the Uniform Boundedness Conjecture, which has been proven by Merel \cite{mer}.

The main purpose of this paper is to make this statement effective for $K=\Q$, by determining the maximum number of quadratic twists with large torsion an elliptic curve can have over $\Q$. We will show that this bound is $3$ for a general $E/\Q$, and it can be even smaller, depending on $E(\Q)_{tors}$.

\begin{theorem}
The possible number of quadratic twists $E^d(\Q)$ with large torsion of $E/ \Q$, depending on $E(\Q)_{tors}$ is as in the table below.
\label{main}
\begin{center}
\begin{tabular}{|c|c||c|c||c|c||c|c||c|c|}
\hline
$C_1$ & $0,1,2$ & $C_4$ & $1,2$ &$C_{7}$& $0$ & $C_{10}$& $0$ & $C_{2}\oplus C_4$& $0,1$ \\
\hline
$C_2$ & $0,1,2,3$ & $C_5$ & $0,1$ &$C_{8}$& $1$ & $C_{12}$& $1$ & $C_{2}\oplus C_6$& $0$ \\
\hline
$C_3$ & $0,1$ & $C_6$ & $0,1,2$ &$C_{9}$& $0$ & $C_{2}\oplus C_2$& $0,1$ & $C_{2}\oplus C_8$& $0$ \\
\hline

\end{tabular}
\end{center}
\end{theorem}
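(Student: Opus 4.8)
The starting point is the twist dictionary. For a nonsquare $d$ fix the isomorphism $\phi\colon E\to E^{d}$ defined over $\Q(\sqrt d)$; it satisfies $\phi^{\sigma}=-\phi$ for the nontrivial $\sigma\in\Gal(\Q(\sqrt d)/\Q)$, so that $\phi$ identifies $E^{d}(\Q)_{tors}$ with the subgroup of $E(\Qbar)_{tors}$ on which $\Gal(\Qbar/\Q)$ acts through the quadratic character $\chi_{d}$ of $\Q(\sqrt d)$. Since $\chi_{d}$ is trivial modulo $2$ one has $E^{d}(\Q)[2]=E(\Q)[2]$, so $E^{d}$ has large torsion exactly when $E^{d}(\Q)_{tors}$ acquires a point of odd prime order $\ell\in\{3,5,7\}$, or a point of order $4$ — the latter possible only when $E(\Q)$ already contains a point of order $2$, since a new point of order $2$ is excluded and a point of order $8$ forces one of order $4$. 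For odd $\ell$, acquiring an $\ell$-torsion point is equivalent to $E$ having a $\Gal(\Qbar/\Q)$-stable cyclic subgroup $C\subseteq E[\ell]$ with character $\chi_{d}$; since a nontrivial quadratic character determines $d$, each such $C$ accounts for at most one $d$. I would also record that $E(\Q(\sqrt d))[m]\cong E(\Q)[m]\oplus E^{d}(\Q)[m]$ for $m$ odd, which couples the problem to the known classification of torsion growth of elliptic curves over $\Q$ in quadratic fields.

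The odd-order count is then controlled by the Weil pairing. If $E$ carries two distinct $\Q$-rational cyclic subgroups $C,C'$ of order $\ell$, then $E[\ell]=C\oplus C'$, the Galois action is diagonal, and $\chi_{C}\chi_{C'}=\det\rho_{E,\ell}=\chi_{\mathrm{cyc},\ell}$, the mod-$\ell$ cyclotomic character, of order $\ell-1$ over $\Q$. For $\ell\in\{5,7\}$ this is incompatible with $\chi_{C},\chi_{C'}$ both of order at most $2$, so at most one $d$ yields an $\ell$-torsion point; and if $E(\Q)$ already has a point of order $5$ or $7$ then $\chi_{C}=1$ forces $\chi_{C'}=\chi_{\mathrm{cyc},\ell}$, of order $>2$, so no twist does. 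For $\ell=3$ one has $\chi_{\mathrm{cyc},3}=\chi_{-3}$ of order $2$, whence at most two such $d$, and at most one if $E(\Q)$ already contains a point of order $3$. Combined with the classification of torsion growth in quadratic fields — which rules out, for instance, $C_{3}\oplus C_{9}$, $C_{21}$, $C_{35}$ and $C_{45}$ — this pins down the entries for the torsion subgroups whose odd part exhausts the ``new'' torsion: e.g.\ $C_{7},C_{9}\to 0$ and $C_{3},C_{5}\to 0,1$.

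For the order-$4$ contribution I would argue with $\rho_{E,4}$. If $P_{0}\in E(\Q)$ has order $2$ and $A\in E[4]$ lifts it ($2A=P_{0}$), then $\tau\mapsto\rho_{E,4}(\tau)A-A$ is a $1$-cocycle valued in $E[2]$, and $E^{d}$ acquires a rational point of order $4$ above $P_{0}$ exactly when its class in $H^{1}(\Gal(\Qbar/\Q),E[2])$ equals the class of $\tau\mapsto\chi_{d}'(\tau)\,P_{0}$, where $\chi_{d}'\colon\Gal(\Qbar/\Q)\to\Z/2$ is the homomorphism attached to $\chi_{d}$; as the former class is fixed once $E$ is fixed, this matches at most one $d$. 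When $E(\Q)_{tors}\in\{C_{4},C_{8},C_{12}\}$ the point $P+T$, with $P$ a rational point of order $4$ and $T$ the nonrational point of $E[2]$, which lies over a quadratic field $\Q(\sqrt{d_{0}})$ with $d_{0}\neq 1$, has order $4$ and lies in the $\chi_{d_{0}}$-eigenspace, so that twist genuinely works and the count is $\ge 1$; when $E(\Q)[2]\cong C_{2}\oplus C_{2}$ one checks that no twist can gain a point of order $4$. Assembling everything: for each of the $15$ groups in \eqref{rac} I would add the contributions from $\ell=3,5,7$ and from order $4$, prune the sum using the classification of $\Q$-rational cyclic isogenies (which severely restricts which isogeny degrees coexist — a $5$- and a $7$-isogeny cannot, for instance) applied to $E$ and to its isogenous quotients, together with the quadratic-torsion classification, and then realise each remaining value by explicit curves, taken from the tables of Cremona and the LMFDB or from the universal curves over the parametrising $X_{1}(N)$.

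The genuine difficulty is the pruning, not any individual bound: one must show that for a fixed $E(\Q)_{tors}$ the separate maxima for $\ell=3,5,7$ and for order $4$ cannot all be realised by a single $E/\Q$. This amounts to deciding, for the finitely many combinations surviving the elementary constraints above, whether the modular curve parametrising the relevant combined level structure — say a $\Q$-rational point of order $4$ together with a diagonal mod-$3$ Galois action, or a $5$-isogeny together with a $3$-isogeny beyond the torsion one — has non-cuspidal rational points of the required kind. Matching the resulting upper bounds with constructions for the larger admissible values ($2$ for $C_{1}$, $3$ for $C_{2}$, $2$ for $C_{4}$ and $C_{6}$), and checking in each construction that the twist lands in exactly the asserted group rather than a still larger one permitted by Mazur's theorem, is where most of the bookkeeping lies.
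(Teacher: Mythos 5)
Your odd-order analysis (eigenspace/character description of twists, Weil pairing forcing $\chi_C\chi_{C'}$ to be the mod-$\ell$ cyclotomic character, pruning via the classification of rational cyclic isogenies and of torsion over quadratic fields) is sound and close in spirit to the paper's use of the Laska--Lorenz decomposition together with Kenku--Mazur and Kamienny--Kenku--Momose. The genuine gap is in your order-$4$ count. Your cocycle argument only shows that $E^d$ gains a point of order $4$ above a fixed rational $2$-torsion point $P_0$ exactly when the class of $\sigma\mapsto\rho_{E,4}(\sigma)A-A$ in $H^1(\Gal(\Qbar/\Q),E[2])$ equals that of $\sigma\mapsto\chi_d'(\sigma)P_0$; but distinct $d$ can give the same class. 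When $E[2]\not\subset E(\Q)$, the coboundary $\sigma\mapsto\sigma(t)-t$ for a nonrational $t\in E[2]$ is exactly $\sigma\mapsto\chi_{\Delta}'(\sigma)P_0$ with $\Q(\sqrt{\Delta})=\Q(E[2])$, so $\chi_d'P_0$ and $\chi_{d\Delta}'P_0$ are cohomologous and \emph{two} twists can acquire $4$-torsion. This is not a hypothetical: the curves 75b4 ($E(\Q)_{tors}\simeq C_2$, twists by $3$ and $5$ both with $C_4$), 450g1 and 30a1 in the paper's Table 1 do exactly this, and it is precisely how the values $3$ for $C_2$ and $2$ for $C_6$ in the statement are attained. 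The correct uniqueness statement ("exactly one twist containing $C_4$") holds only when $E(\Q)_{tors}\simeq C_{4n}$, which is the paper's Lemma \ref{4torz}; for $C_2$ and $C_6$ the count is $0$ or $2$. With your "at most one $d$" bound the combinatorics of the pruning step comes out wrong: you would either fail to realize $3$ in the $C_2$ row, or be forced into the configuration of two $3$-torsion twists plus one $4$-torsion twist, which the paper rules out (two independent $3$-isogenies together with a $12$-isogeny would produce a $36$-isogeny).

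A second, related error: your assertion that when $E(\Q)[2]\simeq C_2\oplus C_2$ no twist can gain a point of order $4$ contradicts Kwon's theorem (quoted as Proposition \ref{full2tors}) and the explicit example 15a1, whose twist by $-1$ has torsion $C_2\oplus C_4$; in fact when the full $2$-torsion is rational the coboundaries vanish, so your own cohomological criterion allows up to one $d$ for each of the three choices of $P_0$, not zero. As written, your table would therefore be wrong in the $C_2$, $C_6$, $C_2\oplus C_2$ and $C_2\oplus C_4$ rows. Finally, note that the "pruning" you defer to modular-curve computations is resolved in the paper by elementary isogeny bookkeeping (Lemma \ref{spust}, \cite[Lemma 7]{naj} and Theorem \ref{izogklas}), so no new rational-point computation is needed once the order-$4$ count is fixed.
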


We prove this result in Sections $3$ and $4$. Explicit examples for all the cases, apart from those in which $E(\Q)_{tors}$ uniquely determines the number of twists with large torsion, can be found in Table 1 at the end of the paper.

In Section $5$ we look at a related problem: given an elliptic curve $E/ \Q$, how many quadratic extensions $K/ \Q$ do there exist such that $E(K)_{tors}\supsetneq E(\Q)_{tors}$? Gonz\'alez-Jim\'enez and Tornero \cite{gt} gave upper bounds for the possible number of such quadratic extensions, depending on $E(\Q)_{tors}$, but their bounds are far from optimal. We give the best possible bounds in the following theorem.

\begin{theorem}
The possible numbers of quadratic fields $K/ \Q$ such that $E(K)_{tors}\supsetneq E(\Q)_{tors}$, depending on $E(\Q)_{tors}$, is given in the table below.
\label{main2}
\begin{center}
\begin{tabular}{|c|c||c|c||c|c||c|c||c|c|}
\hline
$C_1$ & $0,1,2$ & $C_4$ & $1,2,3$ &$C_{7}$& $0$ & $C_{10}$& $1$ & $C_{2}\oplus C_4$& $0,1,2,3$ \\
\hline
$C_2$ & $1,2,3,4$ & $C_5$ & $0,1$ &$C_{8}$& $1,3$ & $C_{12}$& $1$ & $C_{2}\oplus C_6$& $0,1$ \\
\hline
$C_3$ & $0,1$ & $C_6$ & $1,2,3$ &$C_{9}$& $0$ & $C_{2}\oplus C_2$& $0,1,2,3$ & $C_{2}\oplus C_8$& $0$ \\
\hline

\end{tabular}
\end{center}
\end{theorem}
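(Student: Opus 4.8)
The strategy is to reduce Theorem~\ref{main2} to Theorem~\ref{main} together with one additional phenomenon — the growth of the $2$-primary part of the torsion ``for free'' over a quadratic field — and then to run a case analysis over the $15$ groups of \eqref{rac}.

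\emph{Step 1: the twist--field dictionary.} Fix a nonsquare $d$, put $K=\Q(\sqrt d)$ and let $\sigma$ generate $\Gal(K/\Q)$. Let $\psi\colon E^d\to E$ be the isomorphism defined over $K$ with $\psi^{\sigma}=-\psi$. Then $x\mapsto\psi^{-1}(x)$ is a bijection between $E(K)^{-}:=\{x\in E(K):x^{\sigma}=-x\}$ and $E^d(\Q)$, while $E(K)^{+}:=\{x\in E(K):x^{\sigma}=x\}=E(\Q)$. Restricting to torsion gives $E(K)^{+}_{tors}=E(\Q)_{tors}$, $E(K)^{-}_{tors}\cong E^d(\Q)_{tors}$, $E(K)^{+}_{tors}\cap E(K)^{-}_{tors}=E(\Q)[2]$, and $2\,E(K)_{tors}\subseteq E(K)^{+}_{tors}+E(K)^{-}_{tors}$. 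Consequently $E(K)_{tors}\supsetneq E(\Q)_{tors}$ if and only if at least one of the following holds: (a) $E^d$ has large torsion; (b) $E(K)_{tors}\supsetneq E(\Q)_{tors}+E^d(\Q)_{tors}$, i.e.\ the $2$-primary part of the torsion grows over $K$ without a correspondingly large twist. Thus the number in Theorem~\ref{main2} is the cardinality of the union of the set of large-torsion twists — whose size is controlled by Theorem~\ref{main} — with the set of fields $K$ of type~(b), and the task is to describe the latter set and how it overlaps the former.

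\emph{Step 2: growth of the $2$-primary part.} Type~(b) fields come only from rational torsion points becoming divisible by $2$ over $K$, and there are two mechanisms. First, if the $2$-part of $E(\Q)_{tors}$ is cyclic of order $\ge 2$ but $E[2]\not\subseteq E(\Q)$ (the groups $C_2,C_4,C_6,C_8,C_{10},C_{12}$), then the two remaining $2$-torsion points are conjugate over a unique quadratic field $K_0$, and $E(K_0)_{tors}\supseteq E(\Q)_{tors}+E[2]$, which equals $C_2\oplus C_2,\,C_2\oplus C_4,\,C_2\oplus C_6,\,C_2\oplus C_8,\,C_2\oplus C_{10},\,C_2\oplus C_{12}$ respectively; all six are allowed over quadratic fields by the Kenku--Momose--Kamienny classification, so $K_0$ \emph{always} witnesses growth. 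A short eigenspace computation for the $\sigma$-action on $E(K_0)_{tors}$ (using that $\sigma$ fixes the rational generator of $E(\Q)_{tors}$ and swaps the two non-rational $2$-torsion points) shows that $E(K_0)^{-}_{tors}$ contains a point of order $4$ precisely when the $2$-part of $E(\Q)_{tors}$ has order $\ge 4$ — that is, for $C_4,C_8,C_{12}$, where $K_0$ is then already a large-torsion twist and does not raise the minimum, whereas for $C_2,C_6,C_{10}$ the field $K_0$ is new, accounting for the ``$+1$'' relative to Theorem~\ref{main}. Second, a rational point $P$ of order $4$ or $8$ may itself be halved over a quadratic field; whether this occurs, and over how many quadratic fields, is read off from the polynomial whose roots are the $x$-coordinates of the points $Q$ with $2Q=P$, together with the curve equation, and the resulting torsion group (one of $C_8$, $C_{16}$, $C_4\oplus C_4$, \dots) must again occur over a quadratic field. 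This second mechanism enlarges the sets for $C_4$ and $C_8$, and for $C_2\oplus C_2$, $C_2\oplus C_4$, $C_2\oplus C_6$ — which already have full rational $2$-torsion, hence no $K_0$ — it is the halving of the three order-$2$ points, each over its own quadratic field and producing $C_2\oplus C_4$ or $C_4\oplus C_4$, that causes the jump from $\{0,1\}$ large-torsion twists to the larger field counts.

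\emph{Step 3: the case analysis, and the main obstacle.} For each of the $15$ groups one assembles the admissible numbers of large-torsion twists from Theorem~\ref{main}, the forced and optional type-(b) fields from Step~2, and the constraint that every $E(K)_{tors}$ that can arise lies in the Kenku--Momose--Kamienny list (which bounds how many independent halvings are possible); this yields the set of attainable totals in each row. The minimum is realized by curves with no torsion growth whenever $0$ is allowed, and every listed value is exhibited by an explicit curve (Table~1). The value $0$ for $C_7,C_9,C_2\oplus C_8$ follows because Theorem~\ref{main} already forbids large-torsion twists and halving any torsion point would force a group ($C_{14}$, $C_{18}$, $C_2\oplus C_{16}$) not occurring over quadratic fields. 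The main obstacle is the bookkeeping in Step~2: one must determine, group by group, exactly which type-(b) fields are forced, which are optional, and which coincide with large-torsion twists, so as to count the union correctly; and one must show the extreme values are attained — in particular that for $C_2\oplus C_2$ and $C_2\oplus C_4$ the three order-$2$ points can be halved simultaneously over three \emph{distinct} quadratic fields (value $3$) while a curve with no growth at all also exists (value $0$), and that no configuration exceeds the stated bounds. This last part is what forces the explicit constructions of Table~1 and also explains the gaps in certain rows (for instance the absence of the value $2$ for $C_8$, where the type-(b) fields beyond $K_0$ coming from halving the order-$8$ point occur in pairs).
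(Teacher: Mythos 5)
Your overall skeleton — reduce to Theorem \ref{main}, add the ``free'' $2$-primary growth (the unique field $K_0$ of full $2$-torsion, the halving of rational points of order $4$ or $8$), note that $K_0$ coincides with the distinguished $C_4$-twist when $4$ divides the torsion order, and then do a case-by-case count — is the same general strategy as the paper's Section 5. But as written the proposal has genuine gaps exactly where the paper has to work hardest, namely the upper bounds. (i) The ``halvings occur in pairs'' claim, which you use to explain the missing value $2$ in the $C_8$ row and which is also needed for $C_2$, $C_6$, $C_8$, is asserted but never proved; the paper's Lemma \ref{lem2} proves it by analyzing the Galois orbits of the four solutions of $2Q=P$ and ruling out the case that all four lie in one quadratic field via the impossibility of $C_2\oplus C_{4n}$ there. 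Moreover you omit the exceptional case $n=2$: for $E(\Q)_{tors}\simeq C_4$ there can be exactly \emph{one} field with $C_2\oplus C_8$ (e.g.\ 1344m5), and one must then show $g(E)=1$, which requires an $8n$-isogeny argument against Theorem \ref{izogklas}; without this the $C_4$ row is not established.

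(ii) More seriously, your only stated tool for excluding configurations is that each $E(K)_{tors}$ must lie in the Kenku--Momose--Kamienny list, and that cannot deliver several of the bounds, because the offending configurations consist entirely of groups that individually do occur over quadratic fields. For $E(\Q)_{tors}\simeq C_2\oplus C_2$ one must rule out three fields with $C_2\oplus C_4$ together with a fourth field with $C_2\oplus C_6$ (each admissible); the paper does this by showing $\Q(E[4])$ would be biquadratic, enumerating the $C_2\oplus C_2$ subgroups of $\GL_2(\Z/4\Z)$ to force the mod $4$ image into a Borel, and deriving an impossible $8n$-isogeny. Similarly, for $C_6$ one must exclude simultaneous growth to $C_{12}$ and to $C_3\oplus C_6$ (a $36$-isogeny contradiction), for $C_4$ one must exclude two $C_8$-fields together with odd growth (an $8$-isogeny plus an independent odd isogeny), and for $C_2\oplus C_4$ one needs Kwon's result that growth is only to $C_4\oplus C_4$ or $C_2\oplus C_8$ plus an argument that at most two fields yield $C_2\oplus C_8$. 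None of these isogeny-classification/Galois-image arguments (Lemma \ref{spust}, Theorem \ref{izogklas}, \cite[Lemma 7]{naj}, the $\GL_2(\Z/4\Z)$ computation) appear in your proposal, so the claimed maxima $3$ (for $C_4$, $C_6$, $C_2\oplus C_2$, $C_2\oplus C_4$) are not justified. A small additional point: the examples realizing the values of Theorem \ref{main2} are in Table 2, not Table 1, and attainability does need those explicit curves.
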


Explicit examples for all the cases, apart from those in which $E(\Q)_{tors}$ uniquely determines the number of quadratic fields in which the torsion grows, can be found in Table 2 at the end of the paper.

\section{Auxiliary results}

In this section we fix notation and list the main tools, most of which are well known, which we will later use to prove Theorems \ref{main} and \ref{main2}.

Let $E[n]=\{ P\in E(\overline \Q )| nP=0 \}$ denote the $n$-th division group of $E$ over $\overline \Q$ and let $\Q(E[n])$ be the $n$-th division field of $E$. The Galois group $\Gal (\overline \Q /\Q)$ acts on $E[n]$ and gives rise to an embedding $$\rho_n: \Gal(\Q (E[n])/\Q)  \hookrightarrow \GL_2 (\Z/n\Z)$$
called the \emph{mod $n$ Galois representation}. For a number field $K$, $E(K)[n]$ denotes the set of $K$-rational points in $E[n]$.

If there exists a $K$-rational cyclic isogeny $\phi:E\rightarrow E'$ of degree $n$, this implies that $\Ker \phi$ is a $\Gal(\overline K / K)$-invariant cyclic group of order $n$ and we will say that $E/K$ has an $n$-isogeny.

When we say that an elliptic curve over a field $K$ has $n$-torsion we mean that it contains a subgroup isomorphic to $C_n$.

We start with the following important lemma.

\begin{lemma}[Lemma 1.1 in \cite{ll}]
\label{lem1}
Let $L/K$ be a quadratic extension of number fields and let $L=K(\sqrt d)$.
There exist homomorphisms
$$f:E(K)\oplus E^d(K) \rightarrow E(L),$$
$$g: E(L)\rightarrow E(K)\oplus E^d(K),$$
such that the kernels and cokernels of $f$ and $g$ are annihilated by $[2]$.
\end{lemma}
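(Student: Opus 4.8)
The plan is to construct $f$ and $g$ explicitly and then identify their kernels and cokernels. Write $L = K(\sqrt d)$ and let $\sigma$ be the generator of $\Gal(L/K)$. The twist $E^d$ comes with an isomorphism $\psi : E^d \to E$ defined over $L$ (but not over $K$) satisfying $\psi^\sigma = -\psi$, i.e. $\sigma$-conjugation on $\psi$ is the same as composing with $[-1]$. The natural map to use is $f(P,Q) = P + \psi(Q)$, sending $E(K)\oplus E^d(K) \to E(L)$: one checks this lands in $E(L)$ since for $P \in E(K)$ and $Q \in E^d(K)$ we have $(P+\psi(Q))^\sigma = P + \psi^\sigma(Q^\sigma) = P - \psi(Q)$, which need not equal $P+\psi(Q)$ — so in fact I should be more careful and instead send $Q$ to $\psi(Q)$ only after checking Galois-invariance properly; the correct statement is that $\psi$ identifies $E^d(K)$ with the subgroup of $E(L)$ on which $\sigma$ acts by $[-1]$. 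Dually, define $g : E(L) \to E(K)\oplus E^d(K)$ by $g(R) = (R + R^\sigma,\ \psi^{-1}(R - R^\sigma))$: the first coordinate is $\sigma$-fixed hence in $E(K)$, and for the second, $(R-R^\sigma)^\sigma = R^\sigma - R = -(R - R^\sigma)$, so $R - R^\sigma$ lies in the $[-1]$-eigenspace and $\psi^{-1}$ carries it into $E^d(K)$.

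With these definitions the key computations are the two composites. First, $g \circ f$ applied to $(P,Q)$ with $P\in E(K)$, $Q\in E^d(K)$: the image $R = P + \psi(Q)$ has $R^\sigma = P - \psi(Q)$, so $R + R^\sigma = 2P$ and $R - R^\sigma = 2\psi(Q)$, giving $g(f(P,Q)) = (2P, 2Q) = [2](P,Q)$. Hence $\ker f \subseteq \ker [2]$ and $\coker g$ is a quotient of $\ker$ of nothing — more precisely $[2] = g\circ f$ being multiplication by $2$ forces $\ker f$ to be killed by $[2]$ and forces $\coker g$ (as a quotient of the target) to be killed by $[2]$ since every element of the form $[2](P,Q)$ is in the image of $g$. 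Second, $f\circ g$ applied to $R \in E(L)$: $f(g(R)) = (R+R^\sigma) + \psi(\psi^{-1}(R-R^\sigma)) = (R+R^\sigma)+(R-R^\sigma) = 2R = [2]R$. So $\ker g \subseteq E(L)[2]$ and $\coker f$ is killed by $[2]$ because $[2]R = f(g(R))$ lies in the image of $f$ for every $R$. That establishes all four annihilation claims.

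The step requiring the most care is the bookkeeping around the twisting isomorphism $\psi$: one must pin down that $E^d$ can be taken in a model for which $\psi : E^d \to E$ over $L$ satisfies $\psi \circ \sigma = \sigma \circ [-1] \circ \psi$ (equivalently $\psi^\sigma = -\psi$ in the appropriate sense), and that $\psi$ is a group isomorphism so that $\psi^{-1}(R - R^\sigma)$ is genuinely a point of $E^d(K)$ and not merely of $E^d(L)$. Concretely, in a short Weierstrass model $E : y^2 = x^3 + ax + b$ one has $E^d : y^2 = x^3 + ad^2 x + bd^3$ and $\psi(x,y) = (x/d,\ y/d^{3/2})$, and the relation $\psi^\sigma = [-1]\circ\psi$ follows from $\sqrt d^\sigma = -\sqrt d$. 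Once this is set up, everything above is a direct verification; there is no real obstacle beyond making the eigenspace decomposition of $E(L)$ under $\langle\sigma\rangle$ precise. (Alternatively, one can phrase the whole argument cohomologically: $f$ and $g$ arise from the norm and the "anti-norm" maps, and the composites being $[2]$ is the statement that $\mathrm{N} \circ \iota$ and $\iota \circ \mathrm{N}$ equal multiplication by $2$ on the relevant Galois modules; I would include this viewpoint as a remark but carry out the explicit version as the main proof.)
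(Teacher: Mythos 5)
Your construction is correct, and it is the standard argument: the paper itself does not prove this lemma but simply cites it from Laska--Lorenz, whose proof is exactly this explicit one (the twisting isomorphism $\psi$ with $\psi^\sigma=-\psi$, the maps $f(P,Q)=P+\psi(Q)$ and $g(R)=(R+R^\sigma,\psi^{-1}(R-R^\sigma))$, and the identities $g\circ f=[2]$ and $f\circ g=[2]$, which immediately kill the kernels and cokernels by $2$). One small remark: your mid-paragraph worry about $f$ being ``Galois-invariant'' is unnecessary, since $f$ only needs to take values in $E(L)$, which is automatic because $P$, $\psi$, and $Q$ are all defined over $L$; the eigenspace computation $(R-R^\sigma)^\sigma=-(R-R^\sigma)$ together with $\psi^\sigma=-\psi$ is needed only to show the second component of $g$ lands in $E^d(K)$, and you verify that correctly.
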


For odd order torsion Lemma \ref{lem1} translates to the following.

\begin{corollary} Let $n$ be an odd integer. Using the same notation as in Lemma \ref{lem1},
$$E(K)[n]\oplus E^d(K)[n] \simeq E(L)[n].$$
\label{zbrajanje}
\end{corollary}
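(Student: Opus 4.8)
The strategy is to apply Lemma \ref{lem1} directly and then exploit that $n$ is odd to kill the error terms. From Lemma \ref{lem1} we have homomorphisms $f\colon E(K)\oplus E^d(K)\to E(L)$ and $g\colon E(L)\to E(K)\oplus E^d(K)$ whose kernels and cokernels are annihilated by $[2]$. First I would restrict $f$ and $g$ to the $n$-torsion subgroups: since $f$ is a homomorphism, $f$ maps $\bigl(E(K)\oplus E^d(K)\bigr)[n] = E(K)[n]\oplus E^d(K)[n]$ into $E(L)[n]$, and likewise $g$ maps $E(L)[n]$ into $E(K)[n]\oplus E^d(K)[n]$. So we obtain induced homomorphisms
$$f_n\colon E(K)[n]\oplus E^d(K)[n] \longrightarrow E(L)[n], \qquad g_n\colon E(L)[n] \longrightarrow E(K)[n]\oplus E^d(K)[n].$$

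Next I would check that $f_n$ is injective. Its kernel is contained in $\Ker f$, which is annihilated by $[2]$; but $\Ker f_n$ also consists of $n$-torsion elements, hence is annihilated by $[n]$. Since $\gcd(2,n)=1$, any element annihilated by both $2$ and $n$ is trivial, so $\Ker f_n = 0$. The same argument shows $g_n$ is injective.

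Finally I would argue surjectivity of $f_n$ (equivalently, compare orders). The cokernel of $f$ is annihilated by $[2]$, which means $2\cdot E(L) \subseteq \im f$. Intersecting with $E(L)[n]$: given $P\in E(L)[n]$, since $n$ is odd we can write $P = 2Q$ with $Q = \frac{n+1}{2}P \in E(L)[n]$ (because $2\cdot\frac{n+1}{2}P = (n+1)P = P$), and then $P = 2Q \in 2\cdot E(L) \subseteq \im f$; moreover the preimage can be taken in the $n$-torsion since one may post-compose with multiplication by $\frac{n+1}{2}$ to land back in $\bigl(E(K)\oplus E^d(K)\bigr)[n]$. Hence $f_n$ is also surjective, so it is an isomorphism $E(K)[n]\oplus E^d(K)[n] \simeq E(L)[n]$. (Alternatively, having both $f_n$ and $g_n$ injective forces the two finite groups to have equal order, and then $f_n$ is automatically an isomorphism.) The only mildly delicate point is the bookkeeping that shows the preimages and the "division by $2$" can be arranged to stay inside the $n$-torsion; this is where the oddness of $n$ is used in an essential way, and it is really the crux of the argument, though it is entirely elementary.
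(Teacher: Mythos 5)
Your proof is essentially the paper's (implicit) argument: the paper states the corollary as an immediate consequence of Lemma \ref{lem1}, and your route---restrict $f$ and $g$ to the $n$-torsion and use $\gcd(2,n)=1$ to kill the kernels, then conclude by the parenthetical counting argument that two injections between the finite groups $E(K)[n]\oplus E^d(K)[n]$ and $E(L)[n]$ force both to be isomorphisms---is complete and correct. One caveat about your primary surjectivity bookkeeping: as literally stated it does not work, since if $P=f(R)$ then multiplying by $\tfrac{n+1}{2}$ gives $f\bigl(\tfrac{n+1}{2}R\bigr)=\tfrac{n+1}{2}P\neq P$, and you have not shown that the preimage $R$ is a torsion point at all (a priori it could involve points of infinite order of $E(K)$ or $E^d(K)$); the clean fix is to observe that $nR\in\Ker f$ is annihilated by $2$, hence $2nR=0$, so $R$ is $2n$-torsion and $(n+1)R$ lies in the $n$-torsion with $f\bigl((n+1)R\bigr)=(n+1)P=P$. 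Since you also give the counting alternative, the proof as a whole stands.
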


Odd order torsion over quadratic extensions will induce an isogeny of the same order over the base field.

\begin{lemma}[Lemma 5 in \cite{naj}]
\label{spust}
Let $L/ K$ be a quadratic extension, $n$ an odd positive integer, and $E/ K$ an elliptic curve such that $E(L)$ contains $\Z /n\Z$. Then $E/ K$ has an $n$-isogeny.
\end{lemma}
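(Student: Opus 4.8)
The plan is to combine Corollary \ref{zbrajanje} with a descent argument on the cyclic group structure. Since $n$ is odd and $E(L)$ contains a subgroup isomorphic to $\Z/n\Z$, Corollary \ref{zbrajanje} gives $E(K)[n]\oplus E^d(K)[n]\simeq E(L)[n]$, so $E(L)[n]$ contains $\Z/n\Z$ as a direct summand. First I would observe that this forces at least one of $E(K)$ and $E^d(K)$ to contain a point of order a prime $\ell$ dividing $n$; more to the point, for each prime power $\ell^k \parallel n$, one of the two curves has a point of order $\ell^k$ over $K$. The natural object to track is the cyclic group $C$ of order $n$ sitting inside $E(L)$: it is $\Gal(\overline K/L)$-invariant (being a subgroup of $E(L)$), and I claim it is in fact $\Gal(\overline K/K)$-invariant up to the twisting action, which is exactly what produces the $n$-isogeny over $K$.

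The cleanest route is to work twist by twist on $\ell$-primary components. Write $n=\prod \ell_i^{k_i}$; it suffices to produce an $\ell_i^{k_i}$-isogeny over $K$ for each $i$, and then compose them (coprime cyclic isogenies compose to a cyclic isogeny of the product degree). So I may assume $n=\ell^k$ is a prime power. By Corollary \ref{zbrajanje}, either $E(K)$ or $E^d(K)$ contains $\Z/\ell^k\Z$. If $E(K)$ contains $\Z/\ell^k\Z$, then that cyclic subgroup is $\Gal(\overline K/K)$-invariant (each point is $K$-rational), so quotienting by it gives a $K$-rational cyclic $\ell^k$-isogeny and we are done. If instead $E^d(K)$ contains $\Z/\ell^k\Z$, then $E^d/K$ has an $\ell^k$-isogeny by the same reasoning, and I would use that $E$ and $E^d$ are isomorphic over $L$ and in fact that a cyclic $K$-rational isogeny on $E^d$ transfers to one on $E$: the dual isogeny, or more directly the fact that twisting by $d$ is an operation that commutes with quotient maps, shows that the quadratic twist $(E^d/\langle P\rangle)$ is $(E/\langle P'\rangle)^d$ for the corresponding subgroup $P'\subset E$, and the subgroup $\langle P'\rangle\subset E$ is then $\Gal(\overline K/K)$-stable because $\langle P\rangle\subset E^d$ is. Hence $E/K$ itself has an $\ell^k$-isogeny.

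I expect the main subtlety to be precisely this last transfer step: making rigorous that an $n$-isogeny on $E^d$ over $K$ yields an $n$-isogeny on $E$ over $K$. Concretely, the isomorphism $E\cong E^d$ over $L=K(\sqrt d)$ is only defined over $L$ and gets twisted by the nontrivial element of $\Gal(L/K)$; one must check that a $\Gal(\overline K/K)$-stable cyclic subgroup of $E^d$ pulls back, under this $L$-isomorphism, to a subgroup of $E$ that is still $\Gal(\overline K/K)$-stable. This works because the Galois action on $E[n]$ differs from that on $E^d[n]$ only by the quadratic character $\chi_d$ (a scalar $\pm 1$), and scalars preserve every subgroup; so a subgroup is Galois-stable for $E$ if and only if the corresponding subgroup is Galois-stable for $E^d$. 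Once that observation is in place, the argument closes. Alternatively — and this may be the slickest phrasing — one notes that "$E/K$ has an $n$-isogeny" is equivalent to "the image of $\rho_n$ lies in a Borel subgroup of $\GL_2(\Z/n\Z)$," and since $\rho_n$ for $E^d$ equals $\rho_n$ for $E$ multiplied by the scalar character $\chi_d$, the two images lie in a Borel simultaneously.
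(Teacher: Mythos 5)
Your argument is correct, and it is essentially the argument behind the statement: the paper itself gives no proof here but simply cites Lemma 5 of \cite{naj}, whose proof rests on the same odd-order decomposition that you invoke through Corollary \ref{zbrajanje} (for odd $n$ the $n$-torsion of $E(L)$ splits into the $\Gal(L/K)$-eigenspaces $E(K)[n]$ and $E^d(K)[n]$, and on the latter any element of $\Gal(\overline K/K)$ acts by $\pm 1$, so the cyclic subgroup it generates is Galois-stable). Your prime-power reduction and the transfer step are both sound; in particular your key observation that $\rho_n$ of $E^d$ differs from $\rho_n$ of $E$ only by the quadratic character $\chi_d$, which acts by scalars and hence preserves every subgroup, is exactly the right way to move the isogeny from $E^d$ back to $E$. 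One small stylistic point: rather than ``composing'' the coprime isogenies (which strictly speaking requires transporting the second kernel to the quotient curve), it is cleaner to note that your construction produces, for each prime power $\ell_i^{k_i}\,\|\,n$, a $\Gal(\overline K/K)$-stable cyclic subgroup $C_i\subset E$ of order $\ell_i^{k_i}$, and the sum $C_1+\dots+C_r\subset E$ is then Galois-stable and cyclic of order $n$, giving the $n$-isogeny directly; this is a cosmetic fix, not a gap.
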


Note that in the statement of \cite[Lemma 5]{naj}, the base field is $K=\Q$, but that everything generalizes trivially to a general number field $K$.

It will be important for us to know which are the possible $n$-isogenies (by which we mean a cyclic isogeny of degree $n$) over $\Q$ and which are the possible torsion groups over quadratic fields.

\begin{theorem}[\cite{ken2,ken3,ken4,maz2}]
\label{izogklas}
Let $E/ \Q$  be an elliptic curve with an $n$-isogeny. Then $n\leq 19$ or $n\in \{21,25,27,37,43,67,163\}$.
\end{theorem}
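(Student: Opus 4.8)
The stated result is not established in this paper but quoted from work of Mazur and Kenku, so a ``proof'' here means assembling the known ingredients and indicating where the real difficulty lies. The first step is the classical dictionary between isogenies and modular curves: an elliptic curve $E/\Q$ admits a cyclic $\Q$-rational isogeny of degree $n$ if and only if $X_0(n)$ has a rational point that is not a cusp (and whose moduli interpretation is an honest elliptic curve rather than a degenerate one). Thus the theorem is equivalent to the assertion that $X_0(n)(\Q)$ contains such a point exactly for the listed values of $n$.

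Second, I would dispose of the small $n$. For $n\in\{1,\dots,10,12,13,16,18,25\}$ the curve $X_0(n)$ has genus $0$ and possesses a rational cusp, hence infinitely many rational points and infinitely many elliptic curves with an $n$-isogeny; in particular each of these $n$ occurs. For every other $n$ the curve $X_0(n)$ has genus at least $1$, so $X_0(n)(\Q)$ is finite, and the problem reduces to enumerating it and deciding whether any point is non-cuspidal.

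Third comes the substance, split according to the shape of $n$. For $n=p$ prime one invokes Mazur's theorem on rational isogenies of prime degree: working with $J_0(p)$, the Eisenstein ideal, and the associated rank-zero quotient (the winding quotient, or Eisenstein quotient), one shows that a hypothetical non-cuspidal rational point on $X_0(p)$ maps to a rational point of finite order on that quotient, and then a formal immersion argument at a prime $\ell$ of good reduction forces this point to reduce to a cusp modulo $\ell$ for every such $\ell$, which is impossible unless the point is itself a cusp or a point with complex multiplication; tallying the CM possibilities leaves the primes $11,17,19,37,43,67,163$ (the last four being the class-number-one sporadic cases). For composite or prime-power $n$ one bootstraps via the degeneracy maps $X_0(n)\to X_0(m)$ for divisors $m\mid n$ with $m>1$: a non-cuspidal rational point on $X_0(n)$ yields non-cuspidal rational points on all the $X_0(m)$, so Mazur's prime classification already forces the prime factors of $n$ into a tiny set, and one is left with finitely many explicit $n$ (among them $X_0(20),X_0(21),X_0(24),X_0(27),X_0(32),X_0(49),\dots$, mostly of genus $1$, plus a few of genus at least $2$) whose rational points one computes directly; this is the content of Kenku's papers, and the survivors turn out to be precisely $21,25,27$ together with the genus-zero list.

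The principal obstacle, by a wide margin, is the prime case: Mazur's theorem rests on the full machinery of the Eisenstein ideal and cannot be sketched honestly in a paragraph. Everything else — the genus computation, the degeneracy-map reduction, and the finitely many explicit rational-point determinations — is comparatively routine once the prime-degree classification is available.
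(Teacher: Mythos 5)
This statement is imported by the paper with citations to Kenku and Mazur; no proof is given in the text, so there is nothing internal to compare against, and your proposal correctly identifies both the reduction to rational points on $X_0(n)$ and the actual sources of the result (Mazur's prime-degree theorem via the Eisenstein ideal, plus Kenku's determination of the remaining composite levels). One small correction: among the sporadic primes $37,43,67,163$, only $43,67,163$ arise from class-number-one CM orders; the two points of $X_0(37)$ are non-CM, and the CM discriminants $-11$ and $-19$ account for (some of) the points at levels $11$ and $19$.
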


\begin{theorem}[\cite{Kam1,km}]
Let $E/K$ be an elliptic curve over a quadratic field $K$. Then $E(K)_{tors}$ is isomorphic to one of the following groups.
\begin{equation}
\vcenter{\openup\jot\halign{$\hfil#\hfil$\cr
C_m, 1 \leq m\leq 18,\  m\neq 17,\cr
C_2 \oplus C_{2m},\ 1 \leq m\leq 6,\cr
C_3 \oplus C_{3m} ,\ m=1,2,\cr
C_4 \oplus C_4.\cr}}
\end{equation}
\label{kvad}
\end{theorem}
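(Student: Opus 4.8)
This is a deep theorem and I will only indicate the strategy, which proceeds by reducing everything to the geometry of modular curves. Write $E(K)_{\tors}\cong C_m\oplus C_n$ with $m\mid n$. The Weil pairing embeds $\mu_m$ into $K$, so either $m\le 2$, or $(m,K)=(3,\Q(\sqrt{-3}))$, or $(m,K)=(4,\Q(i))$; in the last two cases the level structure is recorded by the genus-$0$ curves $X(3)_{/\Q(\sqrt{-3})}$ and $X(4)_{/\Q(i)}$, each of which has a rational point and hence infinitely many, producing $C_3\oplus C_3$, $C_3\oplus C_6$ and $C_4\oplus C_4$, while one checks that no further $\mu_m$-twisted group ($C_3\oplus C_9$, $C_4\oplus C_8$, $C_6\oplus C_6$, \dots) survives. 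It remains to treat $m\in\{1,2\}$, i.e.\ to decide for which $N$ the curve $X_1(N)$, and for which $t$ the curve $X_1(2,2t)$, has a non-cuspidal point of degree $\le 2$ over $\Q$; the degree-$1$ locus matches Mazur's list \eqref{rac}, so everything comes down to counting \emph{quadratic} points.

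The positive direction---that every group listed does occur---follows by producing low-genus modular curves with abundant non-cuspidal quadratic points: $X_1(N)$ is rational for $N\le 10$ and $N=12$ (as is $X_1(2,2t)$ for small $t$), has genus $1$ for $N=11,14,15$, and has genus $2$, hence is hyperelliptic, for $N=13,16,18$ (and $X_1(2,12)$ is similarly low genus). A genus-$0$ curve over $\Q$ with a rational cusp is $\PP^1_\Q$; a genus-$1$ curve over $\Q$ with a rational point has infinitely many quadratic points (take the residual pair cut out by lines through that point under the degree-$3$ plane embedding); and the $\Q$-rational hyperelliptic map on a genus-$2$ curve pulls $\PP^1(\Q)$ back to infinitely many quadratic points. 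In each case all but finitely many of these are non-cuspidal and parametrize a point of exact order $N$, yielding every entry of the table (and, combined with the ambient $2$-torsion, the even entries). For curves that descend to $\Q$ the odd part can be shortcut via Lemma~\ref{spust} and Corollary~\ref{zbrajanje}, which turn an odd-order point over $K$ into an isogeny over $\Q$ and hence invoke Theorem~\ref{izogklas}; but for a general $E/K$ one must argue on the modular curve.

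The hard direction is finiteness: for the curves not accounted for above---$X_1(17)$, $X_1(19)$, $X_1(N)$ for all $N\ge 20$ (in particular the prime powers $X_1(25)$, $X_1(27)$, and the primes $X_1(p)$ with $p\ge 23$), and the composite curves $X_1(m,mn)$ attached to the remaining groups $C_m\oplus C_{mn}$---one must show there is \emph{no} non-cuspidal point of degree $\le 2$. That infinitely many quadratic points are impossible here follows, once these curves are seen to be neither hyperelliptic nor bielliptic over a positive-rank elliptic curve, from the Abramovich--Harris / Harris--Silverman theorem; but to exclude even a single such point one uses Kamienny's method. Fix a prime $\ell$ of good reduction for $X_1(N)$ and a quotient $\pi\colon J_1(N)\twoheadrightarrow A$ with $A(\Q)$ finite---a $\Q$-simple factor of the winding (Eisenstein) quotient, of Mordell--Weil rank $0$ by analytic non-vanishing together with the results of Kolyvagin--Logach\"ev. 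Form $\phi\colon X_1(N)^{(2)}\to J_1(N)\xrightarrow{\pi}A$ with a cusp as base point and verify that $\phi$ is a formal immersion at the image of $(\infty,\infty)$ modulo $\ell$; this is a rank condition, modulo $\ell$, on the matrix of $q$-expansion coefficients in degrees $1$ and $2$ of the differentials in $H^0(A,\Omega^1)$, a finite computation. A reduction-mod-$\ell$ argument in the style of Mazur's Eisenstein-ideal paper---using the Hasse bound to exclude good reduction of the underlying curve when $N$ is large, and the structure of the N\'eron model to force reduction into a cusp in the bad case, carried out for one or two judiciously chosen primes $\ell$---then shows that a hypothetical non-cuspidal quadratic point would have to equal the cuspidal point, a contradiction.

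I expect the main obstacle to be twofold. First is the uniform prime bound: showing $X_1(p)$ has no non-cuspidal quadratic point for $p\ge 17$, equivalently that prime-order torsion over quadratic fields occurs only for $p\le 13$. This is exactly the point left open in Kenku--Momose and settled by Kamienny, and for degree $d=2$ the formal-immersion criterion requires both constructing enough rank-zero quotients of $J_1(p)$ and a delicate verification of the $q$-expansion rank condition at several auxiliary primes. Second is the bookkeeping for the genus-$\ge 2$ curves that are \emph{a priori} allowed to carry infinitely many quadratic points---a hyperelliptic $X_1(N)$ like $X_1(13),X_1(16),X_1(18)$, or a bielliptic composite-level curve---where one must confirm that these account for \emph{exactly} the groups in the table and that none of the forbidden groups (say some $X_1(2,2t)$ that happens to be bielliptic over an elliptic curve of positive rank, or $X_1(3,9)_{/\Q(\sqrt{-3})}$) slips through; this is handled by exhibiting the hyperelliptic and bielliptic quotients explicitly and computing their Mordell--Weil ranks.
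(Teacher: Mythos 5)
The paper does not prove this statement at all: Theorem~\ref{kvad} is imported verbatim from Kamienny \cite{Kam1} and Kenku--Momose \cite{km}, so there is no internal argument to compare yours against. Judged as a reconstruction of the literature proof, your outline is essentially faithful: the Weil-pairing constraint on the full level structure, the split into rational/elliptic/hyperelliptic modular curves $X_1(N)$ and $X_1(2,2t)$ for the existence half, and Kamienny's formal-immersion criterion applied to a rank-zero quotient of $J_1(N)$ together with a reduction-mod-$\ell$ argument in Mazur's style for the emptiness half is indeed how the theorem is proved. Two small inaccuracies: your trichotomy from the Weil pairing omits the case $m=6$, $K=\Q(\sqrt{-3})$ (you do list $C_6\oplus C_6$ among the groups to be excluded later, but the trichotomy as stated is incomplete since $\varphi(m)\le 2$ allows $m\in\{1,2,3,4,6\}$); and the rank-zero input in Kamienny's 1992 quadratic-field argument is Mazur's Eisenstein quotient and his descent, not the winding quotient via Kolyvagin--Logach\"ev, which belongs to Merel's and later uniform-bound work (your variant is legitimate, just not what \cite{Kam1} does).

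That said, as a proof your text has genuine gaps, which you yourself flag: the formal-immersion rank condition is asserted to be ``a finite computation'' but never carried out for any level, the construction of suitable rank-zero quotients for each relevant $N$ is not given, and the case-by-case exclusion of the composite levels (the content of Kenku--Momose) and of $X_1(17)$, $X_1(19)$, $X_1(25)$, $X_1(27)$, etc.\ is only described. So this is a correct roadmap of \cite{Kam1,km} rather than a self-contained argument; since the paper itself treats the theorem as a black box, that is an acceptable level of detail here, but it should be presented as a citation with commentary, not as a proof.
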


We denote by $\psi_n$ the $n$-th division polynomial of an elliptic curve $E$ (see \cite[Section 3.2]{was} for details), which satisfies that, for a point $P\in E$, $\psi_n(x(P))=0$ if and only if $nP=0$. A method we will often use throughout the paper to determine whether a quadratic twist of a given elliptic curve has large torsion is to factor $\psi_n$ and then compute what the torsion is in the extensions generated by factors of degree $2$ and in the fields of definition of the $y$-coordinate of the point $(x,y)$, where $x$ is the root of a linear factor of $\psi_n$. 

\section{Quadratic twists of curves with odd order torsion}

The easier case will be when an elliptic curve has torsion of odd order. We will use the names of particular elliptic curves as they appear in \cite{cre}.

\begin{proposition}
\label{neparno}
If $E(\Q)_{tors}\simeq C_n$, where $n$ is odd, then $E^d(\Q)_{tors}$ has odd order for all $d\in \Q^*/(\Q^*)^2$. Furthermore,

\begin{itemize}
\item[a)] If $E(\Q)_{tors}\simeq C_n$, where $n>5$ is odd, then $E^d(\Q)_{tors}\simeq C_1$ for all $d\in \Q^*/(\Q^*)^2$.

\item[b)] If $E(\Q)_{tors}\simeq C_5$, then $E^d(\Q)_{tors}\simeq C_1$ for all $d\in \Q^*/(\Q^*)^2$, except if
    $E$ is the curve $50b1$ and $d_1=5$ or if $E$ is $50b2$ and $d_1=-15$; in both of these cases $E^{d_1}(\Q)_{tors}\simeq C_3$ and $E^d(\Q)_{tors}\simeq C_1$ for all other $d\in \Q^*/(\Q^*)^2$.

\item[c)] If $E(\Q)_{tors}\simeq C_3$, then there are $3$ cases:
    \begin{itemize}
    \item[i)] $E^d(\Q)_{tors}\simeq C_1$ for all $d\in \Q^*/(\Q^*)^2$.
    \item[ii)] $E^{-3}(\Q)_{tors}\simeq C_3$ and $E^d(\Q)_{tors}\simeq C_1$ for all $-3\neq d\in \Q^*/(\Q^*)^2$.
    \item[iii)] $E$ is the curve $50a3$ and $d_1=5$ or $E$ is $450b4$ and $d_1=-15$; in both of these cases $E^{d_1}(\Q)_{tors}\simeq C_5$ and
        $E^d(\Q)_{tors}\simeq C_1$ for all other $d\in \Q^*/(\Q^*)^2$.
    \end{itemize}

\item[d)] If $E(\Q)_{tors}\simeq C_1$, then there are the following cases:
    \begin{itemize}
    \item[i)] $E^d(\Q)_{tors}\simeq C_1$ for all $d\in \Q^*/(\Q^*)^2$.

    \item[ii)] $E^{d_1}(\Q)_{tors}\simeq C_n$, where $n=7$ or $9$ for some  $d_1\in \Q^*/(\Q^*)^2$. Then $E^d(\Q)_{tors}\simeq C_1$ for all other $d_1\neq d\in \Q^*/(\Q^*)^2$.

    \item[ii)] There exist $2$ quadratic twists $E^{d_1}$ and $E^{d_2}$ such that $E^{d_1}(\Q)_{tors}\simeq E^{d_2}(\Q)_{tors}\simeq C_3$. Then for all other $d\in \Q^*/(\Q^*)^2$, it holds that  $E^d(\Q)_{tors}\simeq C_1$.

    \item[iii)] There exists $1$ quadratic twist such that  $E^{d_1}(\Q)_{tors}\simeq C_3$ and for all other $d\in \Q^*/(\Q^*)^2$, it holds that  $E^d(\Q)_{tors}\simeq C_1$.

    \item[iv)] If $E$ is a twist of $50b1$ or $50b3$, and is not $50b2$ or $450b4$, respectively, then $E$ has one twist such that $E^{d_1}(\Q)_{tors}\simeq C_3$ and one twist such that $E^{d_2}(\Q)_{tors}\simeq C_5$ and for all other $d\in \Q^*/(\Q^*)^2$, it holds that  $E^d(\Q)_{tors}\simeq C_1$. These are the only instances where an elliptic curve can have $2$ quadratic twists of different odd order torsion.

    \item [v)]  If $E$ is not a twist of $50b1$ or $50b3$ and $E^{d_1}(\Q)_{tors}\simeq C_5$, for some $d_1 \in \Q^*/(\Q^*)^2$, then for all other $d\in \Q^*/(\Q^*)^2$, it holds that $E^d(\Q)_{tors}\simeq C_1$.
    \end{itemize}
\end{itemize}
\end{proposition}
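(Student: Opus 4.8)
\emph{Proof plan.} The proposition has a formal part, handled by group theory together with Corollary~\ref{zbrajanje} and Theorems~\ref{izogklas} and \ref{kvad}, and a finite verification, where a short list of exceptional curves is isolated by explicit computation with division polynomials.

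As $E(\Q)_{\tors}\cong C_n$ with $n$ odd, $E(\Q)[2]=0$; the $2$-torsion of the Mordell--Weil group is unchanged by quadratic twisting (Introduction), so $E^d(\Q)[2]=0$ and $E^d(\Q)_{\tors}$ has odd order, say $C_{n'}$, for every $d$. Applying Corollary~\ref{zbrajanje} with a fixed odd integer divisible by every odd number up to $18$ identifies $C_n\oplus C_{n'}$ with the odd part of $E(L)_{\tors}$, where $L=\Q(\sqrt d)$, hence with the odd part of one of the groups of Theorem~\ref{kvad}; together with Mazur's theorem ($n,n'\in\{1,3,5,7,9\}$) this forces either $\gcd(n,n')=1$ with $nn'\le 15$, or $n=n'=3$. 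In the latter case $E(L)\supseteq E[3]$, so $\mu_3\subseteq L$ by the Weil pairing, i.e. $d=-3$. This already gives $n'=1$ when $n\in\{7,9\}$, proving a), and restricts $n'$ to $\{1,3\}$ for $n=5$, to $\{1,3,5\}$ for $n=3$, and to $\{1,3,5,7,9\}$ for $n=1$.

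To bound how many twists can realize a fixed odd torsion I argue on the representation on $E[p]$ for an odd prime $p$. Suppose $E^{d_1}(\Q)$ and $E^{d_2}(\Q)$ both contain a point of order $p$, with $d_1\ne d_2$ in $\Q^*/(\Q^*)^2$, and first assume $E(\Q)[p]=0$. By Corollary~\ref{zbrajanje} there are points of order $p$ in $E(\Q(\sqrt{d_i}))$, generating $\Gal(\overline\Q/\Q)$-stable lines in $E[p]$ (Lemma~\ref{spust}); these lines are distinct (their intersection would lie in $E(\Q)[p]=0$), so $E[p]$ is their direct sum and $\Gal(\overline\Q/\Q)$ acts on it diagonally through the two quadratic characters of $\Q(\sqrt{d_1})$ and $\Q(\sqrt{d_2})$. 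Taking determinants, the mod $p$ cyclotomic character equals $\chi_{d_1d_2}$, which is impossible for $p\ge 5$ (the cyclotomic character is then not quadratic) and for $p=3$ forces $d_1d_2=-3$. Hence there is at most one twist with torsion divisible by $5$, $7$ or $9$, and at most two with $3$-torsion, the product of the two parameters being $-3$ in the latter case. If instead $E(\Q)[p]\ne 0$, then $p=3$, $3\mid n$, and a twist with $3$-torsion makes $E(\Q(\sqrt d))\supseteq E[3]$, forcing $d=-3$; so then at most one twist has $3$-torsion. This pins down every count in b)--d) up to identifying the exceptional curves and excluding curves with twists of two different odd torsions.

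For the last point, a twist with torsion divisible by an odd prime $p$ forces a $p$-isogeny on $E$ over $\Q$ (Corollary~\ref{zbrajanje}, Lemma~\ref{spust}); for two distinct primes $p\ne q$ these compose to a $pq$-isogeny, so by Theorem~\ref{izogklas} only $pq\in\{15,21\}$ can occur, while the case of a $C_9$-twist alongside a $C_3$-twist is handled directly through the rational curve $X_1(9)$. Every surviving possibility therefore lies over the finitely many non-cuspidal rational points of $X_0(15)$ or $X_0(21)$, or over one of the rational modular curves $X_1(3),X_1(5),X_1(7),X_1(9)$ parametrizing the curves with the relevant rational torsion. For each such curve, or one-parameter family, one fixes a Weierstrass model, factors the appropriate division polynomial $\psi_n$, and reads off the torsion of each quadratic twist from its linear and quadratic factors; the generic member of a family gives only trivial torsion after twisting, and solving for the exceptional parameter values isolates exactly the curves $50a3$, $50b1$, $50b2$, $50b3$, $450b4$ and the twisting parameters $d_1\in\{5,-15\}$ appearing in the statement. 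I expect this finite check --- in particular confirming that the one-parameter families yield no further exceptions and that no curve with a $15$- or $21$-isogeny has more nontrivial quadratic twists than the statement allows --- to be the main obstacle; the earlier reductions are essentially formal.
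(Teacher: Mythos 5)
Your outline is essentially correct and reaches the right case list, but it takes a partly different route from the paper, and it leaves its decisive input as an unexecuted computation. The paper runs everything through Corollary \ref{zbrajanje} plus the quadratic-field classification (Theorem \ref{kvad}): a twist with odd torsion $C_{n'}$ forces $C_n\oplus C_{n'}$ to be (the odd part of) a quadratic-field torsion group, which kills a) at once and shows a curve with a $C_7$- or $C_9$-twist can have no further odd twist, since $C_{21}$ and $C_3\oplus C_9$ simply do not occur in Theorem \ref{kvad}; the curves acquiring $C_{15}$ over a quadratic field, namely $50a3$, $50b1$, $50b2$, $450b4$ with $d\in\{5,-15\}$, are not recomputed but imported from \cite[Theorem 2 c)]{naj}; parts b)--d) then follow by applying a)--c) to the twist with the larger torsion, together with explicit examples for realizability. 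You reproduce the first reduction, but replace the rest by a determinant argument on $E[p]$: two twists with $p$-torsion give two stable lines with quadratic characters, and $\chi_{d_1}\chi_{d_2}=\bar\chi_{\mathrm{cyc},p}$ rules out $p\geq 5$ and forces $d_1d_2=-3$ for $p=3$. That is a clean, genuinely different way to get the counts and the value $d=-3$ (which the paper leaves to the Weil pairing/known field restrictions), though note your parenthetical reason that the two lines are distinct is wrong as written: if they coincided, the common line would carry both characters, forcing $\chi_{d_1}=\chi_{d_2}$ and hence $d_1=d_2$; the intersection need not lie in $E(\Q)[p]$. Your detour through $pq$-isogenies, $X_0(21)$ and a computation over the $X_1(9)$ family is unnecessary (and the $X_1(9)$ step is an infinite-family verification, not a finite check): Theorem \ref{kvad} disposes of both configurations immediately, as above. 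What genuinely remains in your plan, namely pinning down the curves with one $C_3$-twist and one $C_5$-twist and the parameters $5,-15$, is exactly the classification of rational elliptic curves with $C_{15}$ over a quadratic field; rather than redoing that computation over the non-cuspidal rational points of $X_0(15)$, which you flag as the main outstanding work, you can cite \cite[Theorem 2 c)]{naj} as the paper does. With that citation, and with examples witnessing that each listed case actually occurs (the paper uses the Hesse family, $19a3$, $11a2$ and Table 1), your argument closes.
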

\begin{proof} \textbf{a)} If there existed a quadratic twist $E^d$ such that $E^d(\Q)_{tors} \simeq C_m$ for some odd $m$, then it would hold, by Corollary \ref{zbrajanje} that
$$E(\Q(\sqrt d))_{tors}\simeq E(\Q)_{tors}\oplus E^d(\Q)_{tors},$$
which is a contradiction with Theorem \ref{kvad}.

\textbf{b)} If $E$ had a quadratic twist with torsion $C_n$, where $n\geq5$ is odd, Corollary \ref{zbrajanje} would give a contradiction with Theorem \ref{kvad}. The only cases when an elliptic curve $E^d$ with torsion $C_5$ has a quadratic twist with torsion $C_3$ is when $E(\Q(\sqrt d))_{tors}\simeq C_{15}$, and there are only $2$ such cases (see \cite[Theorem 2 c)]{naj}).

\textbf{c)} As before, $E$ cannot have a quadratic twist with torsion $C_n$ where $n>5$ because of Corollary \ref{zbrajanje} and Theorem \ref{kvad}. As in b) there are only $2$ cases when a twist can have torsion $C_5$ \cite[Theorem 2 c)]{naj}. There exist elliptic curves $E/ \Q$ with torsion $E(\Q)_{tors}\simeq C_3$ with a twist $E^d(\Q)_{tors}\simeq C_3$. For example, all curves of the form
$$x^3+y^3+z^3=3\alpha xyz,\ \alpha^3 \neq 1$$
have this property (see \cite[Table 1.]{kub}). Likewise, there exist elliptic curves with $3$-torsion and all quadratic twists with trivial torsion, for example $19a3$ \cite[Table 2]{gt}. The case iii) follows from Corollary \ref{zbrajanje} and \cite[Theorem 2 c)]{naj}.

\textbf{d)} The case i) is for example 11a2 \cite[Table 2]{gt}, while the cases ii), iii), iv), v) and the impossibility of any other case follow from a), b) and c).
\end{proof}

\section{Quadratic twists of curves with even order torsion}

\begin{lemma}
\label{4torz}
If $E(\Q)_{tors}\simeq C_{4n}$, then there exists exactly one quadratic twist $E^d$ such that $E^d(\Q)\supset C_4$.
\end{lemma}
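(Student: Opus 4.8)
The plan is to convert the question into counting ``anti-invariant'' points of order $4$ on $E$ and then to enumerate them directly inside $E[4]$.

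For a nonsquare $d\in\Q^*/(\Q^*)^2$ put $L=\Q(\sqrt d)$ and let $\sigma$ generate $\Gal(L/\Q)$. The quadratic twist isomorphism $\phi\colon E^d\to E$ is defined over $L$ and satisfies $\phi^{\sigma}=-\phi$, so $Q\mapsto\phi(Q)$ identifies the points of order $4$ in $E^d(\Q)$ with the points $R\in E(L)$ of order $4$ satisfying $\sigma(R)=-R$. Hence it is enough to show that there is exactly one nonsquare $d$ admitting such an $R$. Since $E(\Q)_{tors}\simeq C_{4n}$ is cyclic, $E(\Q)$ contains a point $P$ of order $4$, we have $E(\Q)[2]=\{O,2P\}\simeq C_2$, and the two remaining points $T_2,T_3\in E[2]$ satisfy $T_2+T_3=2P$ and are conjugate over a single quadratic field $\Q(\sqrt{d_0})$: since $x(2P)\in\Q$, the $x$-coordinates of $T_2,T_3$ are the roots of the remaining quadratic factor of the rational $2$-division cubic, and this factor is irreducible because $E(\Q)[2]\not\simeq C_2\oplus C_2$, so $d_0$ is nonsquare and the nontrivial element of $\Gal(\Q(\sqrt{d_0})/\Q)$ swaps $T_2$ and $T_3$ (recall $y(T_j)=0$).

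Now, given $R$ of order $4$ with $\sigma(R)=-R$, the point $2R$ is a nonzero $2$-torsion point fixed by $\sigma$, hence $2R\in E(\Q)[2]$, i.e. $2R=2P$; therefore $R-P\in E[2]$ and $R\in\{P,\,3P,\,P+T_2,\,P+T_3\}$. The points $P$ and $3P=-P$ lie in $E(\Q)$, where $\sigma(R)=R\neq -R$ (as $2P\neq O$), so they are excluded. For $R=P+T_2$ one computes, using $-T_2=T_2$ and $T_3=2P-T_2$, that $\sigma(R)=P+T_3=3P+T_2=-(P+T_2)=-R$, and similarly for $R=P+T_3$ (in fact $P+T_3=-(P+T_2)$, so the resulting $C_4$ is unique). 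Since $T_2=R-P\notin E(\Q)$, the point $R=P+T_2$ generates exactly the field $\Q(\sqrt{d_0})$, so $R\in E(L)$ holds if and only if $L=\Q(\sqrt{d_0})$, i.e. $d=d_0$. As $d_0$ is nonsquare this gives existence, and the case analysis shows it is the only possibility, giving uniqueness.

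The one genuinely delicate point is the sign convention $\phi^{\sigma}=-\phi$, which forces the condition on $R$ to be $\sigma(R)=-R$ rather than $\sigma(R)=R$; after that, the argument reduces to the finite case check in $E[4]$ above, together with the observation that the non-rational $2$-torsion of a curve with cyclic rational torsion is defined over one specific quadratic field.
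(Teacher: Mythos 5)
Your proof is correct, and it takes a genuinely different route from the paper's. You handle existence and uniqueness in one stroke via the descent description of twists: order-$4$ points of $E^d(\Q)$ correspond to order-$4$ points $R\in E(\Q(\sqrt d))$ with $\sigma(R)=-R$; since $E(\Q)_{tors}$ is cyclic, $2R$ must be the unique rational point of order $2$, which pins $R$ down to $\pm(P+T_2)$, points defined exactly over the field $\Q(\sqrt{d_0})$ generated by the non-rational $2$-torsion, so $d_0$ is the only twisting class that works. The paper instead proves existence by noting that over the unique quadratic field where $E$ acquires full $2$-torsion the curve and its twist become isomorphic, so $E(\Q(\sqrt d))\supset C_2\oplus C_4$, and then invokes \cite[Theorem 5 iii)]{gt} to descend a $C_4$ to $E^d(\Q)$; uniqueness is then deduced from the Laska--Lorenz exact sequence of Lemma \ref{lem1} by an analysis of which cokernels can be annihilated by $2$. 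Your argument is more elementary and self-contained (no appeal to \cite{gt} or to Lemma \ref{lem1}), it works verbatim over any number field, and it identifies the distinguished twist explicitly as the one attached to the $2$-division field; the paper's version is shorter on the page because it outsources the work to quoted results. The one sign-sensitive ingredient you use, namely $\phi^{\sigma}=-\phi$ for the standard twist isomorphism (e.g. $(x,y)\mapsto(x/d,\,y/d^{3/2})$, valid for any $j$-invariant), is indeed correct, so the anti-invariance condition $\sigma(R)=-R$ and the resulting case check in $E[4]$ are sound.
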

\begin{proof}
There exists exactly one quadratic field $\Q(\sqrt d)$ over which $E$ has full 2-torsion; the elliptic curves $E$ and $E^d$ are isomorphic over this field and $E(\Q(\sqrt d))\simeq E^d(\Q(\sqrt d))$ $\supset C_2 \oplus C_4$. By \cite[Theorem 5 iii)]{gt} it follows that $E^d(\Q) \supset C_4$. Hence we have proven that there exists one quadratic twist with torsion $C_4$; it remains to prove it is the only one. Suppose there exists a different one $E^{d_1}$. Since $E(\Q(\sqrt {d_1}))\simeq C_{4m}$, for some integer $m$, from Lemma \ref{lem1} we have the exact sequence
$$0 \rightarrow \Ker \phi \rightarrow E(\Q(\sqrt d))\xrightarrow{\phi} E(\Q)\oplus E^d(\Q) \rightarrow \coker \phi \rightarrow 0.$$
As $\coker \phi$ is a quotient of a group isomorphic to the quotient of $C_{4n}\oplus C_{4m}$ by $\im \phi$, which is a subgroup of $C_{4k}$ for some integer $k$. Since $C_{4n}$ and $C_{4m}$ have to be subgroups of $C_{4k}$, the only possibility that the cokernel is annihilated by $2$ (which is necessary by Lemma \ref{lem1}) is that $k=n=m=1$ and that the quotient is isomorphic to $C_2 \oplus C_2$.

But this also leads to a contradiction, as the quotient of $C_{4}\oplus C_4$ by $C_4$ cannot be isomorphic to $C_2 \oplus C_2$, and hence annihilated by 2, which contradicts Lemma \ref{lem1}.
\end{proof}

We use this lemma to count the possible number of quadratic twists with large torsion of an elliptic curve with even order torsion.

\begin{proposition}
\label{parno} Let $E(\Q)_{tors}\simeq C_{2n}$.
\begin{itemize}

\item[a)] If $n=1$ then the following $5$ cases are possible:
    \begin{itemize}
    \item[i)] All the quadratic twists of $E$ have just $2$-torsion.

    \item[ii)] $E$ has $1$ quadratic twist with a point of odd order and every other quadratic twist has only $2$-torsion.

    \item[iii)] $E$ has $2$ quadratic twists with $3$-torsion and every other quadratic twist has only $2$-torsion.

    \item[iv)]  $E$ has $2$ quadratic twists with $4$-torsion and every other quadratic twist has only $2$-torsion.

    \item[v)] $E$ has $2$ quadratic twists with $4$-torsion and $1$ quadratic twist with $3$-torsion, and every other quadratic twist has only $2$-torsion.

    \end{itemize}

\item[b)] If $n=2$ then $E$ has $1$ quadratic twist with $4$-torsion. The curve $E$ has $0$ or $1$ quadratic twists with $3$-torsion. All other quadratic twists have only $2$-torsion.

\item[c)] If $n=3$ then there are $3$ cases:
\begin{itemize}
    \item[i)] All the quadratic twists have just $2$-torsion.

    \item[ii)] $E$ has $1$ quadratic twist with $6$-torsion and every other quadratic twist has $2$-torsion.

    \item[iii)] $E$ has $2$ quadratic twists with $4$-torsion and every other quadratic twist has only $2$-torsion.

    \end{itemize}

\item[d)] If $n=4$ then $E$ has exactly one quadratic twist which has torsion $C_4$ and all the other twists have just $2$-torsion.

\item[e)] If $n=5$ then all the quadratic twists of $E$ have just $2$-torsion.

\item[f)] If $n=6$ then $E$ has exactly one quadratic twist which has $4$-torsion and all the other quadratic twists have just $2$-torsion.

\end{itemize}
\end{proposition}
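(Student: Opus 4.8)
The plan is to translate each assertion about the twists $E^d/\Q$ into an assertion about the torsion of $E$ over the quadratic fields $\Q(\sqrt d)$, over which $E$ and $E^d$ become isomorphic, and then to play Lemma~\ref{lem1}, Lemma~\ref{4torz}, Corollary~\ref{zbrajanje}, Theorem~\ref{izogklas} and Theorem~\ref{kvad} against one another. The uniform first observation is that $E(\Q)_{tors}\simeq C_{2n}$ is cyclic with a single point of order $2$ and that, since $2$-torsion is a twist invariant, $E^d(\Q)[2]\simeq C_2$ for every $d$; hence every $E^d(\Q)_{tors}$ is a cyclic group of even order, and ``$E^d$ has large torsion'' means precisely that $E^d(\Q)_{tors}$ contains a point of order $3$, $4$ or $5$. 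Thus the proposition reduces to counting, for each of the six groups $C_2,C_4,C_6,C_8,C_{10},C_{12}$, the twists acquiring such a point, and identifying the resulting torsion groups.

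\emph{Twists with a point of order $4$.} When $4\mid 2n$, i.e.\ for $C_4$, $C_8$, $C_{12}$, Lemma~\ref{4torz} immediately gives exactly one twist $E^d$ with $E^d(\Q)\supseteq C_4$, which settles parts (b),(d),(f) as far as order-$4$ points are concerned. That in the $C_8$ and $C_{12}$ cases this twist has torsion exactly $C_4$ follows from the crude estimate of Lemma~\ref{lem1}: the only other admissible cyclic values would be $C_8$ or $C_{12}$, and each would force $E(\Q(\sqrt d))_{tors}$ to contain, up to a subgroup killed by $[2]$, the group $E(\Q)_{tors}\oplus E^d(\Q)_{tors}$, which a short comparison with Theorem~\ref{kvad} excludes (no group on that list of order exceeding $16$ has a point of order $8$). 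For $C_2$ and $C_6$ (parts (a),(c)) the unique rational $2$-torsion point $(e,0)$ controls everything: writing $E\colon y^2=x^3+ax+b$, the $x$-coordinates of the points of order $4$ of $E$ that lie above $(e,0)$ are the two roots of $X^2-2eX-(a+2e^2)$, a polynomial of discriminant $4f'(e)$, and the corresponding $y$-coordinate is never rational (otherwise $E$ would have a rational point of order $4$, which neither $C_2$ nor $C_6$ permits). Hence $E$ has $0$ twists with a point of order $4$ when $f'(e)\notin(\Q^*)^2$ and exactly $2$ when $f'(e)\in(\Q^*)^2$; the two are genuinely distinct twists, since a coincidence would place $C_2\oplus C_4$ inside the cyclic group $E^d(\Q)_{tors}$.

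\emph{Twists with a point of order $3$ or $5$.} If $E^d(\Q)$ contains such a point, then by Corollary~\ref{zbrajanje} the corresponding part of $E(\Q(\sqrt d))_{tors}$ acquires a cyclic factor, and by Lemma~\ref{spust} the curve $E/\Q$ acquires a $3$- (resp.\ $5$-) isogeny. For $n\geq 2$ a twist with a point of order $5$ is then impossible: composing the $5$-isogeny with the $2$-, $4$- and (when $3\mid n$) $3$-isogenies that $E$ already carries by its rational torsion yields a cyclic isogeny of degree $20$, $30$, $40$ or $60$, forbidden by Theorem~\ref{izogklas} --- except for $C_{10}$, where Corollary~\ref{zbrajanje} would instead give $E$ full $5$-torsion over $\Q(\sqrt d)$, impossible since $\Q(E[5])$ contains $\Q(\zeta_5)$ and so is not quadratic. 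For points of order $3$: when $E(\Q)_{tors}$ already contains $C_3$ (the $C_6$ and $C_{12}$ cases), Corollary~\ref{zbrajanje} shows that such a twist gives $E$ full $3$-torsion over $\Q(\sqrt d)$, whence $\Q(E[3])$ is quadratic and, containing $\Q(\sqrt{-3})$, equals it; so the only possibility is $d=-3$, and there is at most one such twist. When $E(\Q)$ has no $3$-torsion ($C_2,C_4,C_8,C_{10}$), an analysis of the image of $\rho_3$ in $\GL_2(\F_3)$ (which lies in a Borel or a split Cartan subgroup) shows there are at most two twists with a point of order $3$; moreover the split case --- two such twists --- cannot coexist with a twist carrying a point of order $4$, because that twist is a curve with $C_4$ torsion, to which parts (b)/(d)/(f) allow at most one twist with a point of order $3$.

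\emph{Assembly and the main obstacle.} For $n=1$ the remaining combinatorics is pinned down by a ``twist of a twist'' principle: a twist of $E$ carrying a point of odd order is, after a further twist, a curve with torsion $C_m$ or $C_{2m}$ whose twists have already been classified (Proposition~\ref{neparno} and parts (c),(e)), and applying those classifications to it excludes, for instance, three twists with a point of order $3$ and the coexistence of a twist with a point of order $5$ with any other large twist; combined with the $f'(e)$-dichotomy, exactly the five configurations (i)--(v) of (a) survive. The genuinely delicate point --- and the main obstacle --- is the interaction between order-$3$ and order-$4$ twists: excluding the configurations that would give four large twists (for $C_2$), a twist with a point of order $3$ alongside the two $4$-torsion twists (for $C_6$), and more than one twist with a point of order $3$ for $C_4$. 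I would settle these by applying the division-polynomial method described at the end of Section~2 to the rational one-parameter families parametrising elliptic curves with prescribed torsion $C_2,C_4,C_6,C_8,C_{12}$ (the relevant modular curves $X_1(N)$ all having genus $0$), together with the known classification of how torsion grows in quadratic extensions; the realisability of each listed value is then witnessed by the explicit curves in Table~1.
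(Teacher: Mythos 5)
Your framework (translating twists into torsion over $\Q(\sqrt d)$ and playing Lemma \ref{lem1}, Lemma \ref{4torz}, Corollary \ref{zbrajanje}, Theorems \ref{izogklas} and \ref{kvad} off one another) is the right one, and several ingredients are correct — the halving-quadratic dichotomy giving $0$ or $2$ twists with a point of order $4$ (a nice elementary substitute for the paper's application of Lemma \ref{4torz} to the twist), the $d=-3$ argument via $\Q(E[3])\supseteq\Q(\sqrt{-3})$, the $\rho_3$ bound of at most two order-$3$ twists, and the exclusion of $C_8$/$C_{12}$ torsion for the distinguished twist in parts (d), (f). But there is a genuine gap at exactly the place you yourself flag as the ``main obstacle'': the interaction between order-$3$ and order-$4$ twists. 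You do not prove these exclusions; you defer them to ``applying the division-polynomial method to the one-parameter families parametrising $C_2,\dots,C_{12}$ torsion,'' which is not an argument (it amounts to deciding when certain polynomials in the parameter take square values, i.e.\ to finding rational points on auxiliary covers, and you never carry this out), and your alternative ``twist of a twist'' route is circular: for part (a) you invoke parts (b), (c), (d), (f), but the corresponding exclusions there (``more than one order-$3$ twist for $C_4$,'' ``an order-$3$ twist alongside the two order-$4$ twists for $C_6$'') are among the very statements you defer. The paper settles all of these uniformly by an isogeny argument you never use: an order-$3$ twist gives $E$ a $3$-isogeny (two such twists, or one such twist of a curve already containing $C_3$, give two \emph{independent} $3$-isogenies), an order-$4$ twist gives a $4$-isogeny, hence a $12$-isogeny independent of a $3$-isogeny; by \cite[Lemma 7]{naj} some curve isogenous to $E$ then has a $36$-isogeny, contradicting Theorem \ref{izogklas}. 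Some version of this composition argument is indispensable and is missing from your proposal.

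A second, related gap: for parts (d), (e), (f) your conclusions are strictly weaker than what the proposition asserts. For $C_{12}$ you obtain at most one order-$3$ twist (at $d=-3$), and for $C_8$, $C_{10}$ at most one or two, whereas the statement requires \emph{zero} in all three cases; and for $C_{10}$ you never exclude a twist with a point of order $4$ at all (your Lemma \ref{4torz} and halving-quadratic analyses cover only the cases $4\mid 2n$ and $C_2$, $C_6$). These are easy to repair in the paper's style — an order-$3$ twist of a curve with torsion $C_8$ (resp.\ $C_{10}$, $C_{12}$) forces a cyclic $24$- (resp.\ $30$-, $36$-)isogeny over $\Q$, contradicting Theorem \ref{izogklas}, and a point of order $4$ on a twist of a $C_{10}$ curve would force a point of order $20$ in $E(\Q(\sqrt d))$, contradicting Theorem \ref{kvad} — but as written your proposal does not establish (d), (e) or (f), and hence also not the parts of (a) and (b) that you reduce to them.
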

\begin{proof}
\textbf{a)} To prove that each of these cases is possible, we simply list the curves which fall into the respective case. Examples for each of the cases can be found in Table 1. That they have the appropriate number of quadratic twists with given torsion is easy to determine by checking their division polynomials or by checking their isogeny diagrams in \cite{cre}. 

As can be seen from Proposition \ref{4torz}, $E$ can have either $0$ or $2$ quadratic twists with $4$-torsion. From Corollary \ref{zbrajanje} and Theorem \ref{kvad}, it can be seen that if $E$ has a point of order $5$, then it cannot have any other quadratic twist with large torsion. Using again Corollary \ref{zbrajanje} and Theorem \ref{kvad}, we see that $E$ can have at most $2$ quadratic twists with a point of order $3$.  Note that if it has $2$ quadratic twists with a point of order 3, then it cannot have a quadratic twist with order $4$, because in this case $E$ would have $2$ independent isogenies of degrees $3$ and $12$. This would imply that $E$ is isogenous to a curve with a $36$-isogeny, which is a contradiction with Theorem \ref{izogklas}.

The cases \textbf{b)}, \textbf{c)} and \textbf{e)} are proven in the same manner as \textbf{a)}, using Corollary \ref{zbrajanje} and Theorems \ref{izogklas} and \ref{kvad}.

In the case \textbf{d)} one has to additionally check that the quadratic twist of $E$ which contains $4$-torsion has torsion isomorphic to $C_4$ (and not $C_8$). But that can be seen from the fact (Lemma \ref{lem1}) that the kernel of the map $E^d(\Q)\oplus E(\Q)\rightarrow E(\Q(\sqrt d))$ has to be annihilated by 2, and that $(C_8\oplus C_8)/(C_2\oplus C_2) \simeq C_4 \oplus C_4$, which leads to a contradiction with $ E(\Q)\subset  E(\Q(\sqrt d))$.

\end{proof}

Finally, we list the possibilities when $E(\Q)$ has full 2-torsion, which were proven by Kwon.

\begin{proposition}[\cite{kwo}, Theorem 2]
\label{full2tors}
Let $E(\Q)_{tors}\simeq C_2\oplus C_{2n}$.

\begin{itemize}
\item[i)] If $n=1$, then $E$ can have either no large torsion quadratic twists, or $1$ or $2$ quadratic twists with torsion $C_2\oplus C_{4}$ and no other large torsion quadratic twists, or $1$ quadratic twist with torsion $C_2\oplus C_{2m}$ for $m=3$ or $4$ and no other large torsion quadratic twists.

\item[ii)] If $n=2$, then $E$ can have $0$ or $1$ quadratic twists with torsion $C_2\oplus C_{4}$.

\item[iii)] If $n=3$ or $4$ then $E$ has no quadratic twists with large torsion.

\end{itemize}
\end{proposition}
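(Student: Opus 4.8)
The plan is to reduce the whole statement to the extreme cases $n=4$ and $n=3$, settle those, and then realise each listed value with an explicit curve. Since the field of definition of the $2$-torsion is a twist invariant, every $E^d$ has full rational $2$-torsion, so by Mazur's theorem \cite{maz1} $E^d(\Q)_{tors}$ lies in $\{C_2\oplus C_2,\ C_2\oplus C_4,\ C_2\oplus C_6,\ C_2\oplus C_8\}$, and ``large'' means one of the last three. Two elementary remarks will do most of the work. \emph{(R1)} Writing $E^d$ as $dy^2=f(x)$, the isomorphism $E^d\simeq E$ over $\Q(\sqrt d)$ is $(x,y)\mapsto(x,\sqrt d\,y)$; it fixes the $2$-torsion and sends every other $\Q$-point of $E^d$ to a point of $E(\Q(\sqrt d))$ outside $E(\Q)$, so the image $S$ of $E^d(\Q)_{tors}$ inside $E(\Q(\sqrt d))_{tors}$ satisfies $S\cap E(\Q)_{tors}=E[2]$. \emph{(R2)} As $\rho_{E^d,3}=\chi_d\cdot\rho_{E,3}$ (with $\chi_d$ the quadratic character of $\Q(\sqrt d)$, viewed in the scalars of $\GL_2$) and $\det\rho_{E,3}=\chi_{-3}$ is the mod-$3$ cyclotomic character, $E^d$ has a rational point of order $3$ precisely when $\rho_{E,3}$ acts through $\chi_d$ on some Galois-stable line of $E[3]$; two distinct such twists cannot share that line (that would force $\chi_{d_1}=\chi_{d_2}$), so they act on distinct lines, whence $\chi_{d_1}\chi_{d_2}=\det\rho_{E,3}=\chi_{-3}$, i.e. $d_1d_2\equiv-3$, and then Corollary \ref{zbrajanje} gives $E^{d_1}(\Q(\sqrt{-3}))[3]\simeq C_3\oplus C_3$, hence $E^{d_1}(\Q(\sqrt{-3}))_{tors}\supseteq C_6\oplus C_6$, contradicting Theorem \ref{kvad}. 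In particular $E$ has at most one twist with a rational point of order $3$.

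For $n=4$ I would observe that no group in Theorem \ref{kvad} strictly contains $C_2\oplus C_8$ (a strictly larger group has $2$-rank $\ge 2$, hence is of type $C_2\oplus C_{2m}$ or $C_4\oplus C_4$, and none of these has an element of order $8$), so $E(\Q(\sqrt d))_{tors}=C_2\oplus C_8$ for every $d$; then (R1) forces $E^d(\Q)_{tors}=E[2]$, so there are no large twists. For $n=3$ the only group in Theorem \ref{kvad} strictly containing $C_2\oplus C_6$ is $C_2\oplus C_{12}$, so $E(\Q(\sqrt d))_{tors}$ equals $C_2\oplus C_6$ (whence $E^d(\Q)_{tors}=E[2]$ as above) or $C_2\oplus C_{12}$. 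In the latter case (R1) forces $S$ to be a $2$-group with $E[2]\subseteq S\subseteq C_2\oplus C_4$, and (R2) excludes a rational $3$-point on $E^d$ (since $E$ already has one, a second line would give $\det\rho_{E,3}=\chi_d=\chi_{-3}$ and the $C_6\oplus C_6$ contradiction); so the only thing left to rule out is a twist $E^d$, $d\ne 1$, of a $C_2\oplus C_6$ curve with a rational point of order $4$. For this I would put $E$ in Legendre form $y^2=x(x-1)(x-\lambda)$ carrying a rational point of order $3$ and, using the duplication formula on the model $dy^2=x(x-1)(x-\lambda)$ --- where the $x$-coordinates of the $2$-division points are twist independent --- show that such a twist occurs only if $\lambda$ or $\lambda(\lambda-1)$ is a rational square; it then remains to verify that this never happens along the genus-zero modular curve parametrising the admissible $\lambda$, i.e. that one explicit auxiliary curve has no relevant rational point (the other case following by the $S_3$-symmetry of the Legendre family). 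This last verification is the step I expect to be the main obstacle: it is the only part not formally forced by Theorems \ref{izogklas} and \ref{kvad}, though it is an elementary computation.

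Everything else reduces to these two cases. If some twist $E^{d_1}$ of $E$ has torsion $C_2\oplus C_8$ or $C_2\oplus C_6$ (which, by Mazur, is then its exact torsion), it is itself a curve to which the $n=4$, resp. $n=3$, case applies, so none of \emph{its} twists --- in particular no other twist of $E$ --- is large, and $E^{d_1}$ is the unique large twist. For $n=1$, I would write $E:y^2=(x-e_1)(x-e_2)(x-e_3)$ with $e_i\in\Q$; a rational point of order $4$ on $E^d$ doubles to a $2$-torsion point, and a short duplication-formula computation shows that for each $i$ such a point over the twist of $(e_i,0)$ exists for exactly one square class $d$ if $(e_i-e_j)(e_i-e_k)\in(\Q^*)^2$ and for none otherwise. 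Since the product of the three quantities $(e_i-e_j)(e_i-e_k)$ equals $-\big((e_1-e_2)(e_1-e_3)(e_2-e_3)\big)^2$, at most two are squares; the corresponding classes $d$ are distinct (two coinciding would put $E^d[4]$ inside $E^d(\Q)$, forcing $\zeta_4\in\Q$) and, as $E(\Q)_{tors}=C_2\oplus C_2$, none equals $1$. So $E$ has $0$, $1$ or $2$ twists with a point of order $4$, and combining this with ``at most one twist with a rational $3$-point'' and the reduction above, the possibilities are exactly: no large twists; one large twist of type $C_2\oplus C_6$ or $C_2\oplus C_8$; or one or two large twists, all of type $C_2\oplus C_4$ --- which is part (i).

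For $n=2$ a twist cannot have torsion $C_2\oplus C_8$ or $C_2\oplus C_6$, since applying the $n=4$, resp. $n=3$, case to such a twist would force $E(\Q)_{tors}=C_2\oplus C_2$; hence $E^d(\Q)_{tors}\in\{C_2\oplus C_2,\ C_2\oplus C_4\}$, and in the count above one index already realises $d=1$ (as $E$ itself has a point of order $4$), leaving at most one further class --- so $E$ has $0$ or $1$ twist with torsion $C_2\oplus C_4$, which is part (ii); part (iii) is the content of the cases $n=3,4$ already settled. Finally, realisability of each listed value would be exhibited by explicit curves (Table 1), reading off the torsion of their twists from the division polynomials or from the isogeny graphs in \cite{cre}.
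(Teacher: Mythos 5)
Your reduction of everything to the cases $n=3,4$ is sound, and several pieces are correct and complete: the twisting map argument (R1), the character/determinant argument (R2) showing at most one twist in the family has a rational $3$-point, the observation that no group in Theorem \ref{kvad} strictly contains $C_2\oplus C_8$ (which settles $n=4$), and the counting of square classes $d$ for which some $(e_i,0)$ becomes divisible by $2$ (at most two, all distinct, none trivial). But there is a genuine gap at the decisive step: for $n=3$ you must show that a curve with $E(\Q)_{tors}\simeq C_2\oplus C_6$ has no twist with a rational point of order $4$, and you do not prove this -- you only sketch a Legendre-form computation along the genus-zero family and explicitly defer the verification that the relevant square conditions never hold, calling it ``the main obstacle.'' That deferred verification is not a formality (it amounts to finding all rational points on a double cover of the parametrizing curve), and the gap propagates: the mutual-exclusion claims in (i) (a $C_2\oplus C_6$ twist cannot coexist with a $C_2\oplus C_4$ twist) and the exclusion of a $C_2\oplus C_6$ twist in (ii) are all routed through this missing case, so as written only the $n=4$ part of (iii) and the crude counts are actually established. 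Note also that the paper itself offers no proof here -- it quotes Kwon's Theorem 2 -- so a blind argument has to be complete on its own.

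The missing step can in fact be closed with the isogeny machinery the paper already uses in the proof of Proposition \ref{full2torskvad} a), with no computation on modular curves: if $E^d(\Q)\supset C_2\oplus C_4$ then the order-$4$ point spans a Galois-stable cyclic subgroup $C\subset E[4]$; since $E$ also has a rational $2$-torsion point $T\notin C$ and a rational point of order $3$, the quotient $E/\langle T\rangle$ acquires a Galois-stable cyclic subgroup of order $8$ (take $P'\in E[8]$ with $2P'=P$, $C=\langle P\rangle$; its image generates such a subgroup) and, together with the image of the $3$-point, a cyclic $24$-isogeny, contradicting Theorem \ref{izogklas}. The same argument simultaneously kills the coexistence cases in (i) and (ii) directly, without first establishing the $n=3$ case for the twist. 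I would recommend replacing your deferred Legendre computation by this argument (or by an explicit citation of Kwon, as the paper does).
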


Propositions \ref{neparno}, \ref{parno} and \ref{full2tors} together prove Theorem \ref{main}.

\medskip

Finally, we will prove that there are finitely many large torsion quadratic twists over any number field. This is a generalization of \cite[Proposition 1]{gm}, where it was shown that there are finitely many large torsion quadratic twists over $\Q$ and \cite[Lemma 5.5]{mr}, where it was shown that there are finitely many quadratic twists with a point of odd order over a general number field $K$. We will simultaneously prove that $E(K)_{tors}$, where $K$ is a number field, grows in finitely many quadratic extensions, generalizing \cite[Theorem 6]{gt} and \cite[Lemma 3.4 a)]{jkl}.
\begin{theorem}
\label{finiteness}
Let $E/K $ be an elliptic curve over a number field $K$.
\begin{itemize}
\item[a)] There exists finitely many quadratic extensions $L/K$ such that\\ $E(L)_{tors}\supsetneq E(K)_{tors}.$

\item[b)]  There exists finitely many $E^d/K$ with large torsion.
\end{itemize}
\end{theorem}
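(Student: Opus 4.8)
The plan is to derive both parts from Merel's uniform boundedness theorem \cite{mer}, together with the elementary fact that a finite extension of a number field has only finitely many subfields (e.g. because its Galois closure is finite Galois over $K$, and a finite group has finitely many subgroups). By Merel's theorem there is an integer $M=M(K)$ such that for \emph{every} number field $F$ with $[F:\Q]\le 2[K:\Q]$ and every elliptic curve $E'/F$, the group $E'(F)_{tors}$ is killed by $M$. In particular, for every quadratic extension $L/K$ all torsion points of $E(L)$ lie in $E[M]$, and for every $d\in\Q^*/(\Q^*)^2$ the group $E^d(K)_{tors}$ is killed by $M$. The field $K(E[M])$ is then a single finite extension of $K$ independent of $L$ and $d$, and it contains only finitely many quadratic subextensions of $K$; since distinct classes $d$ give distinct fields $K(\sqrt d)$, it suffices in each part to pin the relevant $L=K(\sqrt d)$ inside $K(E[M])$.

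For part (a): if $E(L)_{tors}\supsetneq E(K)_{tors}$, choose $P\in E(L)_{tors}\setminus E(K)_{tors}$. Then $P\in E[M]$, so $K\subsetneq K(P)\subseteq K(E[M])$, and since $[L:K]=2$ we get $K(P)=L\subseteq K(E[M])$. Hence there are only finitely many such $L$.

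For part (b): suppose $E^d/K$ has large torsion, i.e.\ $E^d(K)_{tors}\neq E(K)[2]$. Since the $2$-torsion is unchanged by twisting, $E^d(K)[2]=E(K)[2]$, so $E^d(K)_{tors}$ contains an element whose order does not divide $2$; a suitable multiple of it is a point $P\in E^d(K)$ of order $\ge 3$. Writing $E\colon y^2=x^3+ax+b$ and $E^d\colon dy^2=x^3+ax+b$, the isomorphism $\varphi\colon E^d\to E$ over $L=K(\sqrt d)$ is $(x,y)\mapsto(x,\sqrt d\,y)$; then $\varphi(P)\in E(L)_{tors}$ has order $\ge 3$, so its $y$-coordinate is nonzero, and as $\sqrt d\notin K$ we get $\varphi(P)\notin E(K)$. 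Thus $E(L)_{tors}\supsetneq E(K)_{tors}$, and part (a) applies: there are only finitely many such $L=K(\sqrt d)$, hence only finitely many such $d$.

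The one genuinely substantive input is Merel's theorem, which is exactly what turns $K(E[M])$ into a fixed finite object; the rest is bookkeeping. The only step needing a small argument is showing that the ``new'' torsion point really generates $L$ over $K$ and is not already $K$-rational --- for part (b) this rests on the observation that the twisting isomorphism identifies no $K$-point of $E^d$ with a $K$-point of $E$ except among the $2$-torsion, which is immediate from the explicit formula for $\varphi$.
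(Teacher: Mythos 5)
Your proof is correct, and while part a) is essentially the paper's argument (Merel's uniform bound plus the observation that only finitely many torsion points can be responsible for growth --- you package this as ``every relevant $L$ is a subfield of the fixed finite extension $K(E[M])$'', the paper counts the finitely many points of each bounded order), your part b) takes a genuinely more self-contained route. The paper quotes Mazur--Rubin's lemma that only finitely many twists acquire a point of odd order, and then handles the remaining possibility of $4$-torsion in a twist via the Laska--Lorenz homomorphisms of Lemma \ref{lem1}, reducing to part a). You instead observe directly that any twist $E^d$ with large torsion has a $K$-rational point $P$ of order at least $3$, and that the explicit twisting isomorphism $(x,y)\mapsto(x,\sqrt d\,y)$ sends $P$ to an $L$-rational torsion point of $E$ with $y$-coordinate in $L\setminus K$ (nonzero since $P$ is not $2$-torsion), so the torsion of $E$ grows in $L=K(\sqrt d)$ and part a) finishes the argument; distinct square classes give distinct fields $K(\sqrt d)$, so only finitely many $d$ occur. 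This handles the odd-order and order-$4$ cases uniformly, dispenses with both external lemmas the paper invokes, and is arguably cleaner; what the paper's route buys in exchange is only that it leans on ready-made results rather than on the explicit form of the twisting isomorphism.
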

\begin{proof} \textbf{a)} By \cite{mer}, there exists a bound $n$ such that $\# E(L)_{tors}<n$ for all quadratic extensions $L$ of $K$. It suffices to prove that for any $n_1<n$ there will be finitely many quadratic extensions $L$ of $K$ such that there exists a point of order $n_1$ over $L$, but which is not defined over $K$. But this follows trivially from the fact that $E[n_1]$ has $n_1^2$ (and hence finitely many) elements.

\textbf{b)} By \cite[Lemma 5.5]{mr}, it suffices to prove that there exist only finitely many quadratic twist $E^d(K)$ does not contain $C_4$. First note that if $E(K)_{tors}=E(K(\sqrt d))_{tors}$, then $E^d(K)$ cannot contain $C_4$, for otherwise the map $E^d(K)\oplus E(K) \rightarrow E(K(\sqrt d))$ from Lemma \ref{lem1} could not be annihilated by $2$. Now from a), it follows that this assumption is true for all but finitely many $d\in K^*/(K^*)^2$, proving the claim.
\end{proof}

\section{Number of quadratic fields where the torsion grows}

We introduce the following notation: for $E/ \Q$ by $g(E)$ we denote the number of quadratic fields $K$ such that $E(K)_{tors}\supsetneq E(\Q)_{tors}$.

We first deal with the case when the order of the torsion is odd.

\begin{proposition}
Let $E/ \Q$ such that $|E(\Q)_{tors}|$ is odd. Then $g(E)$ is equal to the number of quadratic twists of $E$ with large torsion.
\end{proposition}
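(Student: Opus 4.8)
The plan is to establish a bijection between the quadratic fields $K = \Q(\sqrt{d})$ in which the torsion grows and the quadratic twists $E^d$ with large torsion, using Corollary \ref{zbrajanje} as the bridge. First I would observe that since $|E(\Q)_{tors}|$ is odd, by Mazur's theorem $E(\Q)_{tors} \simeq C_n$ with $n \in \{1,3,5,7,9\}$, and crucially $E(\Q)[2] = 0$, so that ``large torsion'' for a twist $E^d$ simply means $E^d(\Q)_{tors} \neq 0$. Moreover, by Proposition \ref{neparno} every twist $E^d$ of such an $E$ again has odd order torsion, so any growth of torsion in a quadratic extension is growth of odd order torsion.

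Next I would take a quadratic field $K = \Q(\sqrt d)$ and analyze $E(K)_{tors}$. Write $E(K)_{tors}$; since its order need not be odd a priori, I first argue it \emph{is} odd: $E(K)[2] = E(\Q)[2] = 0$ because $2$-torsion is unaffected by the quadratic base change (the $2$-torsion field of $E$ is either $\Q$ or ramified/split in a way that does not put new $2$-torsion over a quadratic field unless it already did over $\Q$)---more carefully, if $E$ gained a $2$-torsion point over $K$ then $E(\Q)[2] \neq 0$ already by a norm/trace argument, contradiction. Actually the cleanest route: apply Corollary \ref{zbrajanje} with $n$ any odd integer to get $E(\Q)[n] \oplus E^d(\Q)[n] \simeq E(K)[n]$, and note that $E(K)_{tors}$ has odd order: if it had even order, Lemma \ref{lem1} gives $g: E(K) \to E(\Q) \oplus E^d(\Q)$ with kernel and cokernel killed by $2$, and since both $E(\Q)_{tors}$ and $E^d(\Q)_{tors}$ have odd order, $E(K)_{tors}$ would be forced to have order dividing a power of $2$ times an odd number with the $2$-part killed by $2$, forcing a $C_2$ or $C_2\oplus C_2$ which contradicts $E(K)[2] = E(\Q)[2] = 0$. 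Hence $E(K)_{tors}$ has odd order, and Corollary \ref{zbrajanje} (applied to the exact value $n = |E(K)_{tors}|$) gives the clean isomorphism
\[
E(K)_{tors} \simeq E(\Q)_{tors} \oplus E^d(\Q)_{tors}.
\]

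From this isomorphism the equivalence is immediate: $E(K)_{tors} \supsetneq E(\Q)_{tors}$ if and only if $E^d(\Q)_{tors} \neq 0$, i.e.\ if and only if $E^d$ has large torsion. Since $d \mapsto \Q(\sqrt d)$ is a bijection between nonsquare classes in $\Q^*/(\Q^*)^2$ and quadratic fields, the number $g(E)$ of quadratic fields in which the torsion grows equals the number of twists with large torsion. I would then remark that this also recovers the bound: combining with Proposition \ref{neparno}, $g(E) \leq 2$ in all odd-torsion cases. The main (and essentially only) obstacle is the ruling-out of even-order torsion over $K$, i.e.\ confirming that base change to a quadratic field cannot create $2$-torsion when none exists over $\Q$; once the torsion over $K$ is known to be odd, Corollary \ref{zbrajanje} does all the work and the rest is bookkeeping.
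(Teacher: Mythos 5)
Your proposal is correct and takes essentially the same route as the paper, whose entire proof is the one-line observation that the statement follows directly from Corollary \ref{zbrajanje}. The extra work you do to rule out even-order torsion over $K=\Q(\sqrt d)$ is a detail the paper leaves implicit, and your trace argument does settle it (a $2$-torsion point $P\in E(K)\setminus E(\Q)$ would give the rational nonzero $2$-torsion point $P+P^{\sigma}$; alternatively, an $x$-coordinate of a $2$-torsion point generates a cubic extension, which cannot lie in a quadratic field), though note that your ``cleanest route'' via Lemma \ref{lem1} is redundant once $E(K)[2]=0$ is known, since even order already forces a point of order $2$.
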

\begin{proof}
This follows directly from Corollary \ref{zbrajanje}.
\end{proof}

Determining $g(E)$ when $|E(\Q)_{tors}|$ is even is much harder. We start with the following lemma.

\begin{lemma}
If $E(\Q)_{tors}\simeq C_{2n}$, $n\neq 2$, then there are either $0$ or $2$ quadratic fields $K$ such that $E(K)_{tors}\simeq C_{4n}$. If $n=2$, then there can be either $0$, $1$ or $2$ such quadratic fields, and if there is exactly one such quadratic field then $g(E)=1$.
\label{lem2}
\end{lemma}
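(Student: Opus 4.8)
The plan is to exploit the fact that $E(K)_{tors} \simeq C_{4n}$ over a quadratic field $K = \Q(\sqrt{d})$ forces the twist $E^d/\Q$ to carry the "half" of this torsion, and then to count how many such $d$ can occur. First I would observe that if $E(\Q(\sqrt d))_{tors} \simeq C_{4n}$ with $E(\Q)_{tors} \simeq C_{2n}$, then by Lemma \ref{lem1} the map $g : E(\Q(\sqrt d)) \to E(\Q) \oplus E^d(\Q)$ has kernel and cokernel annihilated by $2$; analysing which subgroups of $C_{2n} \oplus E^d(\Q)_{tors}$ can have $C_{4n}$ map onto them with $2$-torsion cokernel, one is forced (exactly as in the proof of Lemma \ref{4torz}) to conclude that $E^d(\Q)_{tors}$ contains a point of order $4$ when $4 \mid 4n$ but $4 \nmid 2n$, i.e. when $n$ is odd; more generally $E^d(\Q)$ must contain $C_{2n}$ with the index-$2$ overgroup coming from the twist. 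The upshot I want is: such a $K=\Q(\sqrt d)$ exists if and only if $E^d/\Q$ is a large-torsion twist of the appropriate kind, and the field $K$ is then determined by that twist.

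Second, I would use the classification of even torsion over quadratic fields (Theorem \ref{kvad}) together with Propositions \ref{parno} and \ref{full2tors} to pin down, for each value of $n$, how many large-torsion twists of the relevant type $E$ can have. The key structural input is Lemma \ref{4torz}: for a curve with $C_{4n}$ torsion there is a unique twist acquiring $C_4$, so by running that argument on $E$ itself one sees that the twists $E^d$ with $E^d(\Q) \supset C_4$ (equivalently, the $d$ for which $E(\Q(\sqrt d))_{tors} \simeq C_{4n}$) come in the pattern "$0$ or $2$": if $E^{d}$ is such a twist, then applying Lemma \ref{4torz} (or the same cokernel computation) to the curve $E^{d}$, which now has torsion $C_{2n}$ as well unless $n=2$, symmetrically produces a second twist, so these twists pair up. This is where the hypothesis $n \ne 2$ enters: when $E(\Q)_{tors} \simeq C_4$, a twist with $C_4$ torsion need not have a partner, because the "doubling" argument that produced the pair degenerates (the relevant quotient $C_4/C_2 \simeq C_2$ no longer obstructs anything), so one can have exactly $1$ such field.

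Third, for the $n=2$ case I would argue that if there is exactly one quadratic field $K$ with $E(K)_{tors} \simeq C_8$, then $g(E) = 1$: any other quadratic field $K'$ in which the torsion grows would have to grow to something in the list of Theorem \ref{kvad} strictly containing $C_4$, hence to $C_8$, $C_{12}$, $C_4 \oplus C_4$, or $C_2 \oplus C_4$ (via $C_2 \oplus C_8$ being impossible here), and I would rule each of these out using Lemma \ref{lem1} and the isogeny bound Theorem \ref{izogklas} — e.g. $C_2 \oplus C_4$ over $K'$ would force full $2$-torsion over $K'$ combined with the $C_4$ already rational, while $C_{12}$ or $C_4 \oplus C_4$ would, after twisting, give $E$ a pair of independent isogenies whose composite exceeds the allowed degrees. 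The bookkeeping showing that $C_4 \oplus C_4$ cannot occur here (since $(C_8 \oplus C_8)/(C_2\oplus C_2) \not\simeq C_4 \oplus C_4$ compatibly, as already used in Proposition \ref{parno}(d)) is routine.

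The main obstacle I anticipate is the $n=2$ analysis: showing that a single quadratic field of growth forces $g(E)=1$ requires ruling out every other way the torsion could grow over a \emph{different} quadratic field, and the curve-by-curve casework — distinguishing growth to $C_8$, to $C_2\oplus C_4$, to $C_{12}$, and to $C_4 \oplus C_4$ — is where the argument is most delicate, since one must combine the Lemma \ref{lem1} annihilation constraints, the quadratic torsion classification, and the isogeny classification simultaneously rather than invoking any one of them alone. Everything else is a direct application of the cokernel-counting technique already established in Lemma \ref{4torz}.
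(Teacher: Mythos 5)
Your argument hinges on identifying the quadratic fields $K=\Q(\sqrt d)$ with $E(K)_{tors}\simeq C_{4n}$ with the quadratic twists $E^d$ satisfying $E^d(\Q)\supset C_4$, and then counting twists via Lemma \ref{4torz}. That identification is false in general, and it fails precisely in cases the lemma must cover. If $E(\Q)_{tors}\simeq C_{2n}$ and $E(K)_{tors}\simeq C_{4n}$, the nontrivial element $\sigma$ of $\Gal(K/\Q)$ acts on $E(K)_{tors}$ by multiplication by $2n+1$, and the anti-invariant part (which is what descends to $E^d(\Q)$) has order $\gcd(2n+2,4n)$; this is $4$ when $n$ is odd but only $2$ when $n$ is even, so for even $n$ such a field produces no twist with a point of order $4$ at all. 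Concretely: for $E(\Q)_{tors}\simeq C_8$ the curve 210e1 gains $C_{16}$ over two quadratic fields yet has exactly one twist containing $C_4$ (Proposition \ref{parno} d)); for $E(\Q)_{tors}\simeq C_{12}$ there is always exactly one twist with $C_4$ but never a field with $C_{24}$ (Theorem \ref{kvad}); and for $E(\Q)_{tors}\simeq C_4$ the number of twists containing $C_4$ is always exactly one, while the number of fields with $C_8$ varies over $0,1,2$ (222c4, 1344m5, 15a7). So your ``equivalently'' breaks, and the pairing argument -- which in any case counts twists, not fields -- does not yield the $0$-or-$2$ dichotomy. Moreover, even in the odd-$n$ cases where the correspondence could be salvaged, you never exclude the real danger: that all four solutions of $2Q=P$ (with $P$ a generator of $E(\Q)_{tors}$) are rational over a single quadratic field, giving one field with $E(K)\supset C_2\oplus C_{4n}$. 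Ruling this out for $n\neq 2$ is exactly where \cite[Theorem 5 iii)]{gt} and Theorem \ref{kvad} must be invoked, and it is exactly what does happen for $n=2$ (growth to $C_2\oplus C_8$, e.g.\ 1344m5); your explanation of why $n=2$ is special (a ``degeneration'' of Lemma \ref{4torz}) is not the actual mechanism.

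The paper's proof avoids twists altogether: it looks at the Galois action on the four solutions of $2Q=P$, which form either one orbit of length $4$ (no field) or two orbits of length $2$, giving at most two quadratic fields, which can coincide only if $E(K)\supset C_2\oplus C_{4n}$ -- impossible for $n\neq2$ by the results just cited. For the $n=2$ supplement your plan has the right flavor but is incomplete: the list of candidate growths over a second field omits $C_{16}$ and $C_2\oplus C_{12}$, and you never bound the number of $C_8$-fields by two. The paper gets the bound from the same orbit count and then, in the one-field case, notes that $E$ has a rational cyclic subgroup of order $8$, so any odd-order growth would combine with it (via Lemma \ref{spust}) to give an $8m$-isogeny, contradicting Theorem \ref{izogklas}. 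As it stands, your proposal has a genuine gap at its central counting step.
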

\begin{proof} We first look at the case $n\neq 2$.
Let $P$ be the generator of $E(\Q)_{tors}$. Now the equation $2Q=P$ has $4$ solutions in $E(\overline \Q)$ and $\Gal(\overline \Q / \Q)$ acts on the solutions. Since the solutions are by assumption not defined over $\Q$, it follows that they have either one orbit of length $4$ or $2$ orbits of length $2$ under the action of $\Gal(\overline \Q / \Q)$. In the former case, there will not exist a quadratic field $K$ such that $E(K)\supset C_{4n}$. In the latter case it follows that $\Gal(\overline \Q / \Q)$ acts on the solutions through $2$ automorphisms of $\overline \Q$ of order 2, say $\sigma$ and $\tau$. It remains to show that $\sigma\neq \tau$ or equivalently that all the solutions of $2Q=P$ are defined over the same field $K$, giving $E(K)\supset C_2\oplus C_4$.

Suppose $\sigma= \tau.$ It follows that all the solutions of $2Q=P$ are defined over a quadratic field $K$, from which it follows that $E(K)\supset C_2 \oplus C_{4n}$. The cases $n=1$ and $n=3$ are impossible by \cite[Theorem 5 iii)]{gt}, while the cases $n\geq 4$ are impossible by Theorem \ref{kvad}.

In the case $n=2$, it is possible that $\sigma= \tau$; the curve 1344m5 is such a curve. It remains to prove that in the case $\sigma= \tau$, or equivalently $E(K)_{tors}\simeq C_2\oplus C_8$ for some quadratic field $K$, that for all quadratic fields $F\neq K$, it is true that $E(F)_{tors}\simeq C_4$. It is obvious that $E$ cannot gain even order torsion in any quadratic field apart from $K$, so it remains to show that it does not gain any odd order torsion in any quadratic extension. First note that such a curve has a Galois-invariant subgroup of order 8 (say the one generated by a $Q$ satisfying $2Q=P$). If it gained $n$-order torsion, where $n$ is odd, in a quadratic extension, then it would mean that over $\Q$ it has an $n$-isogeny. Now it follows that there exists an elliptic curve with an $8n$-isogeny over $\Q$, which is in contradiction with Theorem \ref{izogklas}.
\end{proof}

\begin{proposition}
\label{2torsprosirenje}
Let $E(\Q)_{tors}\simeq C_{2n}$.

\begin{itemize}
\item[a)] If $n=1$, then $g(E)=1,2,3$ or $4$.

\item[b)] If $n=2$ then $g(E)=1,2$ or $3$.

\item[c)] If $n=3$ then $g(E)=1,2$ or $3$.

\item[d)] If $n=4$ then $g(E)=1$ or $3$.

\item[e)] If $n=5$ or $6$ then $g(E)=1$.
\end{itemize}
\end{proposition}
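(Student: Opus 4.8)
The plan is to sort the quadratic fields $K$ with $E(K)_{tors}\supsetneq E(\Q)_{tors}$ into three types and count each. First I would note that $g(E)\ge 1$ in every case: since $E(\Q)_{tors}\cong C_{2n}$ is cyclic of even order, $E$ has exactly one rational point of order $2$, so the quadratic factor of the $2$-division polynomial is irreducible over $\Q$ and cuts out a unique quadratic field $K_0=\Q(\sqrt{d_0})$ over which $E$ acquires full $2$-torsion; hence $E(K_0)_{tors}\supseteq C_2\oplus C_{2n}\supsetneq C_{2n}$. This disposes of the value $0$.

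Next I would classify the remaining growth fields. Let $K=\Q(\sqrt d)\ne K_0$ satisfy $E(K)_{tors}\supsetneq E(\Q)_{tors}$, so that $E(K)[2]=C_2$. If $E^d$ has large torsion, call $K$ a \emph{twist field}; these are governed by Proposition \ref{parno}. Otherwise $E^d(\Q)_{tors}=C_2$, and feeding the map $E(K)_{tors}\to E(\Q)_{tors}\oplus E^d(\Q)_{tors}$ of Lemma \ref{lem1} (kernel inside $E(K)[2]$, cokernel killed by $2$) through a short index computation forces $|E(K)_{tors}|\in\{4n,8n\}$; since the Sylow $2$-subgroup of $E(K)_{tors}$ is cyclic and its odd part is pinned down by Corollary \ref{zbrajanje}, this gives $E(K)_{tors}\cong C_{4n}$ or $C_{8n}$ (the latter only when $8n\le 18$, by Theorem \ref{kvad}). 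Now the nontrivial $\sigma\in\Gal(K/\Q)$ fixes $E(\Q)_{tors}$ pointwise and has order $2$, so on $E(K)_{tors}=\langle Q\rangle\cong C_{4n}$ it must act as multiplication by $1+2n$; a direct computation then shows that the subgroup of $\sigma$-anti-invariants — which is exactly $E^d(\Q)_{tors}$ — has order $4$ when $n$ is odd and order $2$ when $n$ is even, and the analogous computation on $C_{8n}$ always yields anti-invariants of order $\ge 4$. Hence if $n$ is odd, or if $E(K)_{tors}\cong C_{8n}$, then $E^d$ would in fact be large, a contradiction; so the only growth fields that are neither twist fields nor $K_0$ have $n$ even and $E(K)_{tors}\cong C_{4n}$, and Lemma \ref{lem2} then supplies $0$ or $2$ of them, all distinct from $K_0$. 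For $n$ odd the $C_{4n}$-fields are themselves twist fields, so nothing is double counted.

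The last step is to assemble the counts for each $n\in\{1,\dots,6\}$, keeping track of two overlaps. First, when $4\mid 2n$ (that is, for $C_4$, $C_8$, $C_{12}$), Lemma \ref{4torz} shows the unique twist carrying a point of order $4$ is $E^{d_0}$, which is defined over $K_0$ itself, so that twist field coincides with $K_0$. Second, the isogeny classification is the capping tool: if $E$ has a twist with a point of odd order $m$ (Lemma \ref{spust}) — or, in the $C_4$ case, a quadratic field over which $E$ acquires a cyclic subgroup of order $8$ — then $E$ has a rational $m$- (resp.\ $8$-) isogeny, and a further large twist contributing a rational $2$- or $3$-isogeny would force $E$ to be isogenous to a curve with an isogeny of degree exceeding $19$, excluded by Theorem \ref{izogklas}. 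Combining this with Theorem \ref{kvad} (which forbids groups such as $C_6\oplus C_6$ or $C_{24}$ over a quadratic field) and the twist counts of Proposition \ref{parno} a)--f) yields exactly the stated ranges — in particular the gap between $1$ and $3$ for $C_8$ and the ceiling $3$ for $C_4$. Achievability of each listed value is exhibited by the curves in Table 2.

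The step I expect to be the main obstacle is the classification of growth fields: showing there are no hidden growth fields beyond twist fields, $K_0$, and the $C_{4n}$-fields needs the Galois-module computation above together with the constraint that $\sigma$ fixes $E(\Q)_{tors}$ pointwise. The assembly step then requires careful bookkeeping of which of these fields coincide — chiefly the fact that for $4\mid 2n$ the $C_4$-twist sits over $K_0$, and the anomalous behaviour at $C_4$, where $C_2\oplus C_8$ can occur over a quadratic field and the count of such fields in Lemma \ref{lem2} can be odd.
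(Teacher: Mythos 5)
Your proposal is essentially correct, but it is organized quite differently from the paper. The paper proves the proposition case by case on $n$, each time enumerating the possible torsion groups over quadratic fields directly (Lemma \ref{lem2} for the $C_{4n}$-fields, the unique full $2$-torsion field, possible odd growth) and killing the forbidden combinations with ad hoc isogeny arguments (the $4mn$-isogeny argument for $C_2$, the $8$-isogeny argument for $C_4$, the $36$-isogeny argument for $C_6$). You instead prove a uniform dictionary first: every growth field other than $K_0$ is either a twist field or, only for $n$ even, a field with $E(K)_{tors}\simeq C_{4n}$ on which the twist is not large; this rests on the anti-invariant computation ($E^d(\Q)_{tors}$ is the $\sigma$-anti-invariant part of $E(K)_{tors}$, of order $4$ for $n$ odd and $2$ for $n$ even), and then the counts are imported wholesale from Proposition \ref{parno} and Lemma \ref{lem2}, with the overlap $K_0=\Q(\sqrt{d_0})$ for $4\mid 2n$ coming from the proof of Lemma \ref{4torz}. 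This buys a more systematic explanation of \emph{why} the growth fields are exactly ``$K_0$ plus large twists plus the even-$n$ anomaly,'' at the cost of a Galois-module computation the paper never needs; the paper's version is shorter per case but repeats the isogeny capping several times.

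Two spots in your sketch need repair, though both are fixable inside your own framework. First, the claim that on $C_{8n}$ the anti-invariants always have order at least $4$ is false as stated: the involution could a priori act as multiplication by $1+4n$, whose anti-invariants have order $2$. You must use that the $\sigma$-fixed subgroup of $E(K)_{tors}$ is \emph{exactly} $E(\Q)_{tors}$ (Galois descent), which excludes $1+4n$ (its fixed subgroup has order $4n$); with that, for $n$ odd the remaining actions $1+2n,\,1+6n$ do give anti-invariants of order at least $4$, and for $n$ even no admissible action exists at all, so the $C_{8n}$ case is vacuous rather than ``anti-invariants $\geq 4$.'' Second, ``an isogeny of degree exceeding $19$, excluded by Theorem \ref{izogklas}'' is not a valid shortcut, since that theorem allows degrees $21,25,27,37,43,67,163$; you must name the composite degrees actually produced ($24$, $36$, $8m$, $4mn$, etc.) and check they avoid the exceptional list, as the paper does. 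Finally, make sure the $n=2$ assembly uses the full strength of Lemma \ref{lem2} (the odd count occurs only in the $C_2\oplus C_8$-over-$K_0$ situation, where the lemma itself asserts $g(E)=1$); you flag this anomaly but it must be woven into the bookkeeping to get the range $1,2,3$ for $C_4$.
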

\begin{proof}
\textbf{a)} Lemma \ref{lem2} shows that $E$ can have $0$ or $2$ extensions over which $E$ contains $C_4$, and there has to be exactly $1$ extension over which $E$ has torsion $C_2\oplus C_2$ and this extension is necessarily different from the ones over which $E$ has torsion $C_4$ (\cite[Theorem 5 iii)]{gt}). In addition, there can exist a quadratic field over which $E$ has torsion $C_6$. Examples of curves $E$ such that $g(E)=1,2,3$ and $4$ can be found in Table 2.


It is impossible that $E$ gains $4$-torsion in a quadratic extension and odd order torsion in 2 quadratic extensions (of degree $m$ and $n$, respectively). To see that, first note that, for a quadratic field $K$, if $E(K)$ contained $C_4$, (but not $C_2 \oplus C_4$, since that is impossible), then $\Gal(K/\Q)$ would act on $E[4](K)$ by permuting the $2$ points of order $4$, and hence $E$ would have a 4-isogeny over $\Q$. The curve $E$ would also have an $m$-isogeny and an (independent) $n$-isogeny, and hence there would exist an elliptic curve with a $4mn$-isogeny (see \cite[Lemma 7]{naj}) over $\Q$, which is impossible by Theorem \ref{izogklas}. This proves that $g(E) \leq 4$.


\textbf{b)} In the case $n=2$, it follows by Lemma \ref{lem2} that the torsion of $E$ contains $C_8$ in $0, 1$ or $2$ quadratic fields. If it contains $C_8$ in one extension, then as we have shown $g(E)=1$.

If it contains $C_8$ in $2$ extensions, then $E$ has an $8$-isogeny over $\Q$ (this can be shown using the same argument as in a)), and from Theorem \ref{izogklas} it follows that there are no extensions where $E$ gains points of odd order because otherwise Lemma \ref{spust} would lead to a contradiction with Theorem \ref{izogklas}. In addition there will be one quadratic field, different from the ones where the torsion is $C_8$, where the torsion will be $C_2 \oplus C_4$. In this case $g(E)=3$; for example the curve 15a7 is such a curve.

If $E$ contains $C_8$ in $0$ quadratic fields, then it is possible that there exist $1$ quadratic field where $E$ has torsion $C_{12}$. Together with the field with torsion $C_2 \oplus C_4$, this means that $g(E)=2$; 150c1 is such a curve.

\textbf{c)} By Lemma \ref{lem2}, there are 0 or 2 quadratic fields $K$ such that $E(K)_{tors}\simeq C_{12}$ and there is exactly one quadratic field $K$ such that $E(K)_{tors}\simeq C_{2}\oplus {C_6}$. In addition, it is possible that $E(\Q(\sqrt{-3})\simeq C_3\oplus C_6$. Examples of curves with $g(E)=1,2$ and $3$ can be found in Table 2.

To complete the proof, it remains to show that $g(E)\leq 3$, or in other words, that it is impossible that $E(\Q(\sqrt{-3}))\simeq C_3\oplus C_6$ and that there exists a quadratic field $K$ such that $E(K)_{tors}\simeq C_{12}$. If such a $K$ existed, it would follow that $E/\Q$ has a $12$-isogeny and from the fact that $E(\Q(\sqrt{-3}))\simeq C_3\oplus C_6$ it would follow that $E/\Q$ has 2 independent $3$-isogenies. Combining these 2 facts we would get that $E/\Q$ has a 12-isogeny and an independent 3-isogeny. By \cite[Lemma 7]{naj}, it follows that there exists an elliptic curve over $\Q$, isogenous to $E$, with a 36-isogeny. But this is impossible by Theorem \ref{izogklas}.

\textbf{d)} This follows from the fact that there exists exactly $1$ quadratic field over which $E$ has torsion $C_2\oplus C_{2n}$ and, by Lemma \ref{lem2}, 0 or 2 quadratic extensions with $16$-torsion.

\textbf{e)} As before, by Theorem \ref{kvad}, there exists exactly $1$ quadratic field over which $E$ has torsion $C_2\oplus C_{2n}$.

\end{proof}

\begin{remark}
Note that if $E(\Q)_{tors}\simeq C_2$, it is impossible that the torsion of $E$ grows to $C_4$ in one extension and to $C_{2n}$, where $n\geq 5$ is odd in another extension. One can see this as if $E(\Q)_{tors}\simeq C_2$ and $E(K)_{tors}\supset C_4$ for some quadratic field $K$, it follows that $E$ has a $4$-isogeny over $\Q$. Also, by Lemma \ref{spust}, it follows that $E$ has a $4n$-isogeny over $\Q$, which is a contradiction with Theorem \ref{izogklas}.
\end{remark}

\begin{proposition}
\label{full2torskvad}
Let $E(\Q)_{tors}\simeq C_2\oplus C_{2n}$.

\begin{itemize}
\item[a)] If $n=1$, then $g(E)=0,1,2$ or $3$.

\item[b)] If $n=2$, then $g(E)=0,1,2$ or $3$.

\item[c)] If $n=3$ then $g(E)=0$ or $1$.

\item[d)] If $n=4$ then $g(E)=0$.

\end{itemize}
\end{proposition}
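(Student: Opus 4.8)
The plan is to treat $n=1,2,3,4$ separately. In every case $E(\Q)_{tors}$ contains the full $2$-torsion, so for a quadratic field $K$ with $E(K)_{tors}\supsetneq E(\Q)_{tors}$ the group $E(K)_{tors}$ must strictly contain $C_2\oplus C_{2n}$, and by Theorem \ref{kvad} this leaves only a short list: for $n=4$ no group in Theorem \ref{kvad} strictly contains $C_2\oplus C_8$, so $g(E)=0$ and (d) follows at once; for $n=3$ the only possibility is $C_2\oplus C_{12}$; and for $n=1,2$ the possibilities are $C_2\oplus C_{2m}$ for the relevant $m$ together with $C_4\oplus C_4$. The value $0$ is realized by any curve whose torsion does not grow in any quadratic field, and Table 2 exhibits curves realizing the remaining admissible values by inspecting their division polynomials; the work is in the upper bounds.

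I would bound $g(E)$ by separating growth of the $2$-primary part from the appearance of new odd-order torsion. For the $2$-primary part, write $E\colon y^2=(x-e_1)(x-e_2)(x-e_3)$ with $e_i\in\Q$; a rational point $(e_i,0)$ becomes divisible by $2$ over a field $F$ exactly when $e_j-e_i$ and $e_k-e_i$ are both squares in $F$. Since $E[2]\subseteq E(\Q)$, for any $R$ of order $4$ with $2R\in E(\Q)$ the map $\sigma\mapsto\sigma(R)-R$ is a homomorphism $\Gal(\overline\Q/\Q)\to E[2]$, so, exactly as in the proof of Lemma \ref{lem2}, the four points in $R+E[2]$ lie in a single field whose degree over $\Q$ is the order of the image; hence halving a fixed rational point of $2$-power order, and likewise reaching a point of order $8$ above a fixed rational point of order $4$, can each cause growth in at most one quadratic field. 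The identity
$$(e_2-e_1)(e_3-e_1)\cdot(e_1-e_2)(e_3-e_2)\cdot(e_1-e_3)(e_2-e_3)=-\bigl((e_2-e_1)(e_3-e_1)(e_3-e_2)\bigr)^2$$
shows that the product of the three quantities $a_i:=\prod_{j\neq i}(e_j-e_i)$ equals $-1$ times a square, hence is not a square in $\Q^*$; consequently at most two of the three points $(e_i,0)$ can become $2$-divisible over a quadratic field, and in fact not all three can. Together with the fact that $C_4\oplus C_4$ over a quadratic field can only occur over $\Q(\sqrt{-1})$ — where all the relevant differences of the $e_i$, being positive rationals, would have to become rational squares, forcing one of the $(e_i,0)$ to be halved already over $\Q$ — this rules out $E(K)_{tors}\simeq C_4\oplus C_4$ over a quadratic field when $E(\Q)_{tors}\in\{C_2\oplus C_2,C_2\oplus C_4\}$. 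When $E(\Q)_{tors}\simeq C_2\oplus C_4$ one of the $a_i$ is already a square, so at most one further $(e_i,0)$ can become $2$-divisible over a quadratic field.

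For new odd-order torsion, if $E$ acquires a point of odd prime order $\ell$ over a quadratic field then by Lemma \ref{spust} $E$ has an $\ell$-isogeny over $\Q$. Combining this, via \cite[Lemma 7]{naj}, with the $2n$-isogeny coming from $C_{2n}\subseteq E(\Q)$ and — when $2$-primary growth also occurs in some quadratic field — with the cyclic $4$-isogeny it produces on $E$ or on a curve $2$-isogenous to $E$ (in the notation above, one of $\langle R\rangle$ or $\langle R,w\rangle$, $w\in E[2]$, is $\Gal(\overline\Q/\Q)$-stable), one obtains a curve isogenous to $E$ with a cyclic isogeny whose degree exceeds the bound of Theorem \ref{izogklas}, unless $\ell$ is small and the mechanisms do not overlap. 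Running this through the admissible targets yields: for $n=3$, two distinct quadratic fields with $E(K)_{tors}\simeq C_2\oplus C_{12}$ would produce a $12$-isogeny and an independent $3$-isogeny, hence a $36$-isogeny on an isogenous curve, so $g(E)\le1$; for $n=1,2$, odd-order growth is confined to essentially one extra quadratic field (if $E[3]$ is fully split over $\Q$ there are two such fields, but then any $2$-primary growth would force a $36$-isogeny, and $E[5]$ split alongside $2$-primary growth would force a $20$-isogeny; moreover $E[3]$ and $E[5]$ cannot both split). Collecting these estimates gives $g(E)\le3$ for $n=1$ (at most two fields of $2$-primary growth plus at most one of odd-order growth, or two fields of odd-order growth and none of $2$-primary) and for $n=2$, $g(E)\le1$ for $n=3$, and $g(E)=0$ for $n=4$.

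The main obstacle is precisely this last coordination for $n=1$ and $n=2$: one must verify that no combination of admissible growths over distinct quadratic fields totals four or more, simultaneously tracking the square-class constraints among the $e_i$, the admissible targets from Theorem \ref{kvad}, and the isogeny-class constraints from \cite[Lemma 7]{naj} and Theorem \ref{izogklas}. The delicate point is that growth to $C_2\oplus C_4$ (or to $C_2\oplus C_8$) over a quadratic field need not produce a rational cyclic $4$-isogeny on $E$ itself, only on a curve $2$-isogenous to $E$, so the cohomological count of the homomorphisms $\sigma\mapsto\sigma(R)-R$ and the isogeny-class count must be used in tandem, each covering cases the other misses.
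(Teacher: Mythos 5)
Your square-class argument for the $2$-primary growth is where the proof breaks. From ``$(e_i,0)$ is halved over a quadratic field $F$'' you cannot conclude that $a_i=(e_i-e_j)(e_i-e_k)$ is a rational square: a rational number is a square in $\Q(\sqrt d)$ iff it lies in $(\Q^*)^2\cup d(\Q^*)^2$, so one of the two differences may be a rational square while the other lies in the class of $d$, and then $a_i$ is not a square. Hence the identity $a_1a_2a_3=-(\,\cdot\,)^2$ does not prevent all three points $(e_i,0)$ from becoming $2$-divisible, each over its own quadratic field. Indeed this happens: the curve $15a2$ in Table 2 has $E(\Q)_{tors}\simeq C_2\oplus C_2$ and gains $C_2\oplus C_4$ over the three fields $\Q(\sqrt{5}),\Q(\sqrt{-5}),\Q(\sqrt{-1})$, and $15a1$, $210e3$ (with $E(\Q)_{tors}\simeq C_2\oplus C_4$) do grow to $C_4\oplus C_4$ over $\Q(\sqrt{-1})$, contradicting your claimed exclusion of $C_4\oplus C_4$ (your parenthetical reason there, that the differences are ``positive rationals'' which would have to be rational squares, has no basis). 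Because you believe at most two fields of $2$-primary growth can occur, you never treat the genuinely hard case of part a): three quadratic fields with $C_2\oplus C_4$ together with a putative field of odd-order growth. The paper handles exactly this by computing that the mod $4$ image must then be a specific $C_2\oplus C_2$ subgroup of $\GL_2(\Z/4\Z)$ contained in a Borel, so $E$ acquires a rational cyclic $4$-isogeny, which together with an independent $2$-isogeny and the odd isogeny from Lemma \ref{spust} produces an $8n$-isogeny on a $2$-isogenous curve, contradicting Theorem \ref{izogklas}. Your proposal has no substitute for this step, and your tally for $n=2$ is likewise inconsistent: if $C_4\oplus C_4$ were impossible, the value $g(E)=3$ (realized by $210e3$ precisely via one $C_4\oplus C_4$ field and two $C_2\oplus C_8$ fields) could not be attained by any configuration you allow, since odd growth is excluded for $C_2\oplus C_4$ by \cite[Theorem 2]{kwo} (or by an $8$-isogeny plus $\ell$-isogeny argument you only sketch).

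Two further points. Your argument for c) is unjustified: two quadratic fields with $C_2\oplus C_{12}$ would give new points of order $4$ in two fields, but you do not show this yields a rational cyclic $12$-isogeny, and the ``independent $3$-isogeny'' has no source other than the rational $C_3$ already sitting inside your putative $12$-kernel, so no $36$-isogeny follows; the paper simply invokes \cite[Theorem 1 (ii)]{kwo} here. On the positive side, your observation that $\sigma\mapsto\sigma(R)-R$ is a homomorphism to $E[2]$, so that all halves of a fixed rational point lie in one multiquadratic field and can create growth in at most one quadratic field, is correct and recovers the paper's count of at most two $C_2\oplus C_8$ fields in part b); and your deduction of d) directly from Theorem \ref{kvad} (no admissible group strictly contains $C_2\oplus C_8$) is a valid shortcut. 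But as it stands the upper bounds for a), b), c) are not proved.
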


\begin{proof}
\textbf{a)} We first note that $E$ can gain torsion $C_2\oplus C_4$ over either $0,1,2$ or $3$ quadratic fields; there are $12$ points of order $4$ in $E(\overline \Q)$, so it is obvious that there cannot be more than $3$ extensions with torsion $C_2\oplus C_4$, since $E$ gains $4$ points of order $4$ in each of them.

Next we observe that it is impossible that $E$ can gain points of odd order in more than $1$ extension, since Lemma \ref{spust} would give a contradiction with Theorem \ref{izogklas}.

We claim that if $E$ gains torsion $C_2\oplus C_4$ in $3$ extensions, that then $E$ gains no odd order points in any quadratic extension. Let $G=\Gal(\overline \Q /  \Q)$. Suppose the opposite. Then by Lemma \ref{spust} it follows that $E$ has an $n$-isogeny, for some odd value $n$. In this case $K:=\Q(E[4])$ is a biquadratic field and hence the image of the mod $4$ Galois representation $\rho_{E,4}(G)$ has to be isomorphic to $\Gal(K/\Q)\simeq C_2\oplus C_2$. Let $\Gal(K/\Q)=\{\id, \sigma_1, \sigma_2, \sigma_3\}$. All the $\sigma_i$ have to have the property that they fix the complete 2 torsion, as $E(\Q)\supset C_2\oplus C_2$, and that they fix a different point of order $4$, as $E(F_i)\supset C_2\oplus C_4$, where $F_i$ is the quadratic field fixed by $\sigma_i$. By running through (in Magma \cite{mag}) all the subgroups of $\GL_2(\Z/4\Z)$ isomorphic to $C_2\oplus C_2$ we obtain that the only possibility, up to conjugacy, for $\rho_{E,4}(G)$ is the group
$$\left\{ \begin{pmatrix}1 & 0\\ 0 &1\end{pmatrix},\begin{pmatrix}1 & 0\\ 0 &3\end{pmatrix},\begin{pmatrix}1 & 2\\ 0 &1\end{pmatrix}, \begin{pmatrix}1 & 2\\ 0 &3\end{pmatrix}\right\}.$$
We see that this group is contained in a Borel subgroup, hence it follows that $E$ has a cyclic $4$-isogeny over $\Q$. As $E$ also has an independent $2$-isogeny, this implies that there is an elliptic curve $E'/\Q$, $2$-isogenous to $E$ over $\Q$, with an $8n$-isogeny, which is impossible.

Thus, we have proven that $g(E)\leq 3$. Furthermore, one can find examples of curves $E/\Q$ such that $g(E)=0,1,2$ or $3$ in Table 2, proving a).

\textbf{b)} First note that by \cite[Theorem 2]{kwo}, the torsion of $E$ over a quadratic field can grow into either $C_4\oplus C_4$ or into $C_2\oplus C_8$.

We claim that there exist at most two quadratic fields over which $E$ has torsion $C_2 \oplus C_8$. Let $\diam{P,Q}\simeq E(\Q)_{tors}$, where $P$ is of order $2$ and $Q$ of order $4$. If $E$ gains points of order 8 over a quadratic field, it gains 8 of them in each quadratic field. The only way in which this can happen is that a point of order 4 becomes divisible by 2. Over one quadratic field $K$, $E$ can possibly gain the $4$ solutions of the equation $2X=Q$ and the $4$ solutions of the equation $2X=3Q$, and over another field $F$, $E$ can gain the 4 solutions of $2X=P+Q$ and the 4 solutions of $2X=P+3Q$. This exhausts all the possible quadratic points of order 8, proving our claim.


It follows that $g(E)\leq 3$ (at most two extension with torsion $C_2\oplus C_8$ and one with $C_4\oplus C_4$). Examples of curve $E/\Q$ such that $g(E)=0,1,2$ and $3$ are given in Table 2, completing the proof of $b)$.

Part \textbf{c)} is \cite[Theorem 1 (ii)]{kwo} and \textbf{d)} is \cite[Theorem 1 (iii)]{kwo}.
\end{proof}

\section{Torsion of Cubic and Quartic twists}

To complete the study of large torsion in twists, we have to study cubic and quartic twists of elliptic curves. This is a much easier problem, since we have to study only the elliptic curves with $j$-invariant $0$ and $1728$. There is a difference in cubic and quartic twisting compared to quadratic twisting in the sense that the $2$-torsion can change upon quartic and cubic twisting. In particular, the elliptic curve $E_2:y^2=x^3+2$ has trivial torsion, while its cubic twist $E_1:y^2=x^3+1$ and all quadratic twists of $E_1$ have non-trivial $2$-torsion.

We state the results for elliptic curves with $j$-invariant $0$ in the following proposition.

\begin{proposition}
Among all the curves with $j$-invariant $0$, there exists infinitely many with trivial torsion, infinitely many with torsion $C_2$, infinitely many with torsion $C_3$, and 1 with torsion $C_6$.
\end{proposition}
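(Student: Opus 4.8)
The plan is to reduce the whole statement to the arithmetic of the sextic twist family $E_D\colon y^2=x^3+D$, $D\in\Q^*$: every elliptic curve over $\Q$ with $j$-invariant $0$ is $\Q$-isomorphic to some $E_D$, and $E_D\cong E_{D'}$ over $\Q$ exactly when $D/D'\in(\Q^*)^6$, so I may assume $D$ is sixth-power-free and the claims become assertions about such integers $D$. I would first record three facts. \textbf{(i)} $E_D$ has a rational $2$-torsion point iff $-D\in(\Q^*)^3$ (a $2$-torsion point is a rational root of $x^3+D$); moreover, when $a^3=-D$, the cofactor $x^2+ax+a^2$ has discriminant $-3a^2<0$, so $E_D$ never has full $2$-torsion and its $2$-part is $C_1$ or $C_2$. \textbf{(ii)} $E_D$ has a rational $3$-torsion point iff $D\in(\Q^*)^2$ or $D\in-432\,(\Q^*)^6$: since $\psi_3=3x(x^3+4D)$, such a point has $x=0$, which forces $D$ to be a square and gives $(0,\sqrt D)$, or has $x$ a rational root of $x^3+4D$ with $-3D$ a square, and a short computation rewrites this second case as $D\in-432\,(\Q^*)^6$. \textbf{(iii)} $\#E_D(\Q)_{tors}$ divides $6$: for a prime $\ell\equiv2\pmod 3$ of good reduction the map $x\mapsto x^3$ permutes $\F_\ell$, so $\#E_D(\F_\ell)=\ell+1$, and running the injective reduction map over two suitable such primes not dividing $D$ (e.g. $\ell=5$ and $\ell=11$, or $\ell=17$ and $\ell=23$, depending on $D$) gives $\#E_D(\Q)_{tors}\mid 6$. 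Since an abelian group of order dividing $6$ is cyclic, \textbf{(i)}--\textbf{(iii)} show $E_D(\Q)_{tors}\in\{C_1,C_2,C_3,C_6\}$, determined entirely by the presence of $2$- and $3$-torsion.

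The three infinitude statements then follow from explicit families. With $D=q$ prime, $-q$ is not a cube and $q$ is positive and not a square, so $E_q$ has no $2$- or $3$-torsion and $E_q(\Q)_{tors}=C_1$; there are infinitely many primes and distinct ones give non-isomorphic curves. With $D=-m^3$, $m\ge2$ squarefree, $-D=m^3\in(\Q^*)^3$ yields the $2$-torsion point $(m,0)$, while $D=-m^3$ is not a square and has $2$-adic valuation in $\{0,3\}$, hence lies outside $-432\,(\Q^*)^6$ (whose elements have $2$-adic valuation $\equiv4\pmod6$); so $E_D(\Q)_{tors}=C_2$, and distinct squarefree $m$ give distinct sixth-power-free $D$. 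With $D=m^2$, $m\ge2$ squarefree, $D$ is a square so $(0,m)$ is a $3$-torsion point, while $-m^2$ is not a cube (else $m^2$ would be a sixth power, impossible for squarefree $m\ge2$), so $E_D(\Q)_{tors}=C_3$; again these are pairwise non-isomorphic. Finally $E_1\colon y^2=x^3+1$ carries the $2$-torsion point $(-1,0)$ and the $3$-torsion point $(0,1)$, so $E_1(\Q)_{tors}=C_6$.

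The genuinely non-routine point, and where I expect essentially all the work to lie, is the uniqueness of the $C_6$ curve. If a sixth-power-free $D$ has $E_D(\Q)_{tors}=C_6$, then by \textbf{(i)} and \textbf{(ii)} we have $-D\in(\Q^*)^3$ together with ($D\in(\Q^*)^2$ or $D\in-432\,(\Q^*)^6$). If $D\in-432\,(\Q^*)^6$, sixth-power-freeness forces $D=-432=-2^4 3^3$, but then $-D=2^4 3^3$ is not a cube, contradicting \textbf{(i)}. Hence $D$ is a positive sixth-power-free square with $-D$ a cube; writing $-D=(-n)^3$ with $n>0$ gives $D=n^3$ with $n$ squarefree (otherwise some prime would divide $D$ to the sixth power), and $D$ being simultaneously a square and a cube forces it to be a sixth power, so $D=1$. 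Thus $y^2=x^3+1$ is the unique curve over $\Q$ with $j$-invariant $0$ and torsion $C_6$, which completes the proof.
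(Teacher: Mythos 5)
Your proof is correct, and its skeleton is the same as the paper's: classify $j=0$ curves as sextic twists $E_D\colon y^2=x^3+D$ with $D$ sixth-power-free, read off $2$-torsion from rational roots of $x^3+D$, read off $3$-torsion from $\psi_3=3x(x^3+4D)$, and bound the total torsion so that only $C_1,C_2,C_3,C_6$ can occur. The difference is in how the last two ingredients are obtained. The paper simply cites Dieulefait--Gonz\'alez-Jim\'enez--Jim\'enez Urroz for the $3$-torsion criterion and Parish for the fact that no further torsion arises, and leaves the counting of twists implicit; you instead prove these facts directly, via the division polynomial (correctly including the class $D\in-432(\Q^*)^6$, which the paper's phrasing ``$E_D(\Q)[3]\simeq\Z/3\Z$ if $D$ is a square and trivial otherwise'' omits --- harmlessly, since those curves land among the infinitely many with $C_3$) and via supersingular reduction at primes $\ell\equiv 2\pmod 3$, and you make the infinite families and the uniqueness of $D=1$ explicit. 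This buys self-containedness at the cost of a little care you should still add: in (i), ``no full $2$-torsion'' by itself does not bound the $2$-part by $C_2$ (a cyclic $C_4$ would also have a single rational $2$-torsion point), so that clause really rests on (iii); and in (iii) the sample pairs $(5,11)$ and $(17,23)$ do not cover every $D$ (e.g.\ $5\cdot 17\mid D$), so you should say explicitly that by Dirichlet one can always choose good primes $\ell\equiv 2\pmod 3$ with prescribed residues modulo $4$, $9$, $5$, $7$ avoiding $6D$, making $\gcd$ of the orders $\ell+1$ divide $6$ (or invoke Mazur to dispose of large prime factors). With those two sentences added, your argument is complete and, arguably, more informative than the paper's citation-based proof.
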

\begin{proof}
An elliptic curve with $j$-invariant $0$ is of the form
$$E_D:y^2=x^3+D, \text{ where } D\in \Q^*/(\Q^*)^6.$$
It follows that $E_D(\Q)[2]\simeq \Z/2\Z$ if $D$ is a cube and
and $E_D(\Q)[2]$ is trivial otherwise. A computation using division polynomials proves that there is no $4$-torsion in $E_D(\Q)$.

By \cite[Theorem 3]{diu} it follows that $E_D(\Q)[3]\simeq \Z/3\Z$ if $D$ is a square and $E_D(\Q)[3]$ is trivial otherwise.

There can be no other torsion in $E_D(\Q)$ by \cite[Proposition 1]{par}.
\end{proof}

\begin{remark}
Note that elliptic curves with $j$-invariant $0$ are the only curves that have infinitely many large torsion twists, if one does not restrict twisting only to quadratic twisting. \end{remark}

We deal in a similar way with elliptic curves with $j$-invariant $1728$.

\begin{proposition}
Among all the curves with $j$-invariant $1728$, there exists infinitely many with torsion $C_2$, infinitely many with torsion $C_2\oplus C_2$ and $1$ with torsion $C_4$.
\end{proposition}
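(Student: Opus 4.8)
The plan is to parametrize all curves with $j$-invariant $1728$ and then read off the torsion from their $2$- and $4$-division behaviour, exactly as was done above for $j=0$. Every such curve has a model $E_D:y^2=x^3+Dx$ with $D\in\Q^*/(\Q^*)^4$ (these are the quartic twists of $y^2=x^3+x$), and $E_D\cong E_{D'}$ over $\Q$ precisely when $D/D'$ is a fourth power. Since $x^3+Dx=x(x^2+D)$, the point $(0,0)$ is always rational of order $2$, so $C_2\subseteq E_D(\Q)_{tors}$, and the full $2$-torsion is rational exactly when $-D\in(\Q^*)^2$. Both situations occur for infinitely many non-isomorphic curves: taking $D=-e^2$ with $e$ ranging over representatives of $\Q^*/(\Q^*)^2$ produces infinitely many curves with $E_D(\Q)[2]\simeq C_2\oplus C_2$, while taking $D=p$ prime produces infinitely many with $E_D(\Q)[2]\simeq C_2$.

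Next I would pin down when $E_D$ has a rational point of order $4$. Such a point $Q$ must satisfy $2Q\in E_D(\Q)[2]$, and the duplication formula on $E_D$ gives $x(2Q)=(x(Q)^2-D)^2/(4y(Q)^2)$. Hence $2Q=(0,0)$ forces $x(Q)^2=D$; writing $D=c^2$ one needs $y(Q)^2=2c^3$, which is solvable in $\Q$ iff $2c$ is a square, i.e. iff $D\in 4(\Q^*)^4$. If instead $-D=e^2$ is a square, the three $2$-torsion points are $(0,0),(\pm e,0)$, and the classical criterion (a root $e_i$ of the defining cubic gives a $2$-torsion point divisible by $2$ in $E_D(\Q)$ iff $e_i-e_j$ is a nonzero square for each of the other two roots $e_j$) forces $2$ or $-1$ to be a square, which is impossible. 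So $E_D$ has a rational $4$-torsion point iff $D\in 4(\Q^*)^4$, i.e. for exactly one twist class, represented by $E_4:y^2=x^3+4x$, which contains the point $(2,4)$ of order $4$; as $-4$ is not a square its $2$-torsion is just $C_2$, and a short computation (or inspection of \cite{cre}: this is the curve $32a1$) shows $E_4(\Q)_{tors}\simeq C_4$.

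Finally I would rule out every other torsion group, which is the only non-formal step. A division-polynomial computation for the family $y^2=x^3+Dx$ shows $E_D(\Q)$ contains no point of odd prime order: for instance $\psi_3=3x^4+6Dx^2-D^2$ has no rational root because the corresponding value of $x^2$ involves $\sqrt3$, and $\psi_5,\psi_7$ — which, thanks to the order-$4$ automorphism $(x,y)\mapsto(-x,iy)$, are polynomials in $x^2$ here — are dealt with in the same way, exactly as for $j=0$ in \cite{diu,par}. Granting this, Mazur's theorem \eqref{rac} together with the previous two paragraphs forces $E_D(\Q)_{tors}\in\{C_2,\,C_2\oplus C_2,\,C_4\}$: possession of a point of order $4$ (hence membership in any of $C_4,C_8,C_{12},C_2\oplus C_4,C_2\oplus C_8$) happens only for $E_4$, where one additionally checks there is no point of order $8$; rational full $2$-torsion gives $C_2\oplus C_2$; and otherwise the torsion is exactly $C_2$. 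Consequently there are infinitely many curves with torsion $C_2$, infinitely many with $C_2\oplus C_2$, and exactly one ($E_4$) with $C_4$. The only real obstacle is the odd-order division-polynomial computation — equivalently, one may instead invoke the classification of torsion subgroups of elliptic curves over $\Q$ with complex multiplication.
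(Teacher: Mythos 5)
Your proposal is correct and takes essentially the same route as the paper: parametrize the quartic twists $E_D\colon y^2=x^3+Dx$ with $D\in\Q^*/(\Q^*)^4$, read off the $2$-torsion, identify $E_4\colon y^2=x^3+4x$ as the unique twist with a rational point of order $4$ via a division-polynomial/duplication computation, and rule out odd-order torsion either by division polynomials or by the CM torsion classification of \cite{diu} and \cite{par}, which is precisely the paper's citation. You simply supply the details the paper summarizes as ``a computation using division polynomials,'' and you state the full $2$-torsion condition correctly as $-D$ being a square (the paper's wording says ``$D$ is a square'').
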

\begin{proof}
An elliptic curve with $j$-invariant $1728$ is of the form
$$E_D:y^2=x^3+Dx, \text{ where } D\in \Q^*/(\Q^*)^4.$$
It follows that $E_D(\Q)[2]\simeq \Z/ 2 \Z$ if $D$ is not a square and $E_D(\Q)[2]\simeq \Z/ 2 \Z\oplus \Z /2\Z$ if $D$ is a square. A computation using division polynomials shows that the only elliptic curve with $4$-torsion in this family is
$E_{4}:y^2=x^3+4x \text{ with }E_4(\Q)_{tors}\simeq \Z/4\Z.$
By \cite[Theorem 3]{diu}, there is no $p$-torsion in $E_D(\Q)$ for any odd primes $p$.

\end{proof}

\begin{acknowledgements}
We thank Burton Newman for pointing out a mistake in an earlier version of this paper, and the anonymous referees for their corrections and for suggesting many improvements both in the presentation and the content of the paper.
\end{acknowledgements}

\section*{Appendix}
Table 1 gives examples for every case possible from Theorem \ref{main}, excluding those when the choice of $E(\Q)_{tors}$ uniquely determines the number of twists with large torsion. The second column gives an example of such an elliptic curve, the third column gives the number of quadratic twists of $E$, and in the fourth column all the $d$-s such that $E^d(\Q)$ has large torsion are listed, with $E^d(\Q)_{tors}$ given in brackets for every listed $d$.

\begin{center}
\begin{tabular}{|c|c|c|c|}
\hline
$E(\Q)_{tors}$ & $E$ & $\#$  & $d (E^d(\Q)_{tors})$ \\
\hline
$C_1$ & 11a2 & $0$& -\\
$C_1$ & 832f1 & $1$ & $2(C_7)$\\
$C_1$ & 1600c1 & $2$ & $2(C_5), 10(C_3)$\\
$C_2$ & 52a1 & 0& - \\
$C_2$ & 4800l3 & 1 & $2(C_{10})$\\
$C_2$ & 75b4 & 2& $5(C_4),3(C_4)$\\
$C_2$ & 450g1 & 3& $-3(C_4),5(C_4),-15(C_6)$\\
$C_3$ & 19a3 & 0& - \\
$C_3$ & 19a1 & 1 & $-3(C_3)$ \\
$C_4$ & 15a7 & 1& $15(C_4)$ \\
$C_4$ & 150c1 & 2 & $-15(C_4),5(C_6)$ \\
$C_5$ & 11a1 & 0& - \\
$C_5$ & 50b1 & 1 & $5(C_3)$  \\
$C_6$ & 20a1 & 0& - \\
$C_6$ & 14a2  & 1 &$-3(C_6)$ \\
$C_6$ & 30a1 & 2& $5(C_4),-3(C_4)$ \\
$C_2\oplus C_2$ &120b2& 0& - \\
$C_2\oplus C_2$ &  150c2& 1 & $5(C_2\oplus C_6)$  \\
$C_2\oplus C_4$ &24a1& 0& - \\
$C_2\oplus C_4$ &  15a1& 1 & $-1(C_2\oplus C_4)$  \\
\hline

\end{tabular}
\vspace{0.2cm}
\\
Table 1.
\end{center}
\newpage
Table 2 gives examples for every case possible from Theorem \ref{main2}, excluding those when the choice of $E(\Q)_{tors}$ uniquely determines the number of quadratic fields in which the torsion of $E$ grows. The values in the columns are listed in a similar manner as in Table 1.

\begin{center}
\begin{tabular}{|c|c|c|c|}
\hline
$E(\Q)_{tors}$ & $E$ & $g(E)$  & $d (E(\Q (\sqrt d))_{tors})$ \\
\hline
$C_1$ & 11a2 & $0$& -\\
$C_1$ & 832f1 & $1$ & $2(C_7)$ \\
$C_1$ & 1600c1 & $2$ & $2(C_5), 10(C_3)$\\
$C_2$ & 52a1 & 1& $-1(C_2\oplus C_2)$ \\
$C_2$ & 4800l3 & 2 & $ -15(C_2\oplus C_2), 2(C_{10})$ \\
$C_2$ & 75b4 & 3& $15(C_2 \oplus C_2), 5(C_8), 3(C_4)$\\
$C_2$ & 2880n1 & 4& $-6(C_6), 2(C_4), -30(C_4), -15(C_2\oplus C_2)$, \\
$C_3$ & 19a3 & 0& - \\
$C_3$ & 19a1 & 1 & $-3(C_3\oplus C_3)$ \\
$C_4$ & 222c4 & 1& $3(C_2\oplus C_4)$ \\
$C_4$ & 90c1 & 2 & $-15(C_2\oplus C_4),-3(C_{12})$ \\
$C_4$ & 15a7 & 3 & $5(C_8),3(C_8),15(C_2\oplus C_4)$\\
$C_5$ & 11a1 & 0& - \\
$C_5$ & 50b1 & 1 & $5(C_{15})$  \\
$C_6$ & 20a1 & 1& $-1(C_2\oplus C_6)$ \\
$C_6$ & 14a2  & 2 &$2(C_2\oplus C_6),-3(C_3\oplus C_6)$ \\
$C_6$ & 30a1 & 3& $5(C_{12}),-3(C_{12}),-15(C_2\oplus C_6)$  \\
$C_8$ & 15a4 & 1 & $-1(C_2\oplus C_8)$\\
$C_8$ & 210e1 & 3 &  $-7(C_2\oplus C_8),-15(C_{16}), 105(C_{16})$\\
$C_2\oplus C_2$ & 120b2& 0& - \\
$C_2\oplus C_2$ &  33a1 & 1 & $-1(C_2\oplus C_4)$  \\
$C_2\oplus C_2$ & 960o2 & 2& $6(C_2\oplus C_4),-2(C_2\oplus C_6)$ \\
$C_2\oplus C_2$ & 15a2 & 3& $5(C_2\oplus C_4),-5(C_2\oplus C_4), -1(C_2\oplus C_4)$ \\
$C_2\oplus C_4$ & 24a1 & 0 &  - \\
$C_2\oplus C_4$ & 21a1 & 1 & $-3(C_2\oplus C_8)$ \\
$C_2\oplus C_4$ & 15a1 & 2 & $-1(C_4 \oplus C_4),5(C_2\oplus C_8)$ \\
$C_2\oplus C_4$ & 210e3 & 3 & $-1(C_4 \oplus C_4), 6(C_2\oplus C_8),-6(C_2\oplus C_8)$ \\
$C_2\oplus C_6$ & 30a2 & 0 &  - \\
$C_2\oplus C_6$ &  90c6 & 1 & $6(C_2\oplus C_{12})$ \\
\hline

\end{tabular}
\vspace{0.2cm}
\\
Table 2.
\end{center}

\end{document}